\newtheorem{remark}[theorem]{Remark}
\def\IC{{\mathbb C}}
\def\IR{{\mathbb R}}
\def\bi{{\mathbf i}}
\def\bj{{\mathbf j}}
\def\({\left (}
\def\){\right )}
\def\Diag{{\rm diag}\,}
\def\tr{{\rm tr}\,}
\def\rank{{\rm rank}\,}
\def\M{M_{n_1}\otimes\cdots \otimes M_{n_m}}
\def\N{M_{n_1\cdots n_m}}
\def\A{A_1\otimes\cdots\otimes A_m}
\begin{document}
\openup 1\jot

%%%%%%%%%%%%%%%%%%%%%%%%%%%%%%%%%%%%%%%%%%%%%%%%%%%%%%%%%%%%%%%%%%%%%%%%%%%%%%%%%%%%%%%%%%%%%%%%%%%%%%%%%%%%%%%%%%%%%%%%%%%%%%%%%%%%%%%%%%%%%%%%%%%%%%%%%%%%%%%%%%%%%%%%%%%%%%%%%%%%%%%%%%%%%%%%%%%%%%%%%%%%%%%%%%%%%%%%%%%%%%%%%%%%%%%%%%%%%%%%%%%%%%%%%%%%%%%%%%%%%%%%%%%%%%%%%%%%%%%%%%%%%%%%%%%%%%%%%%%%%%%%%%%%%%%%%%%%%%%%%%%%%%%%%%%%%%%%%%%%%%%%%%%%%%%%%%%%%%%%%%%

\title{Linear maps preserving Ky Fan norms and Schatten  norms of tensor products of matrices}

\author{Ajda Fo\v sner\thanks{
Faculty of Management, University of Primorska,
Cankarjeva 5, SI-6104 Koper, Slovenia.
(Email: ajda.fosner@fm-kp.si)}
\and
Zejun Huang\thanks{
Department of Applied Mathematics,
The Hong Kong Polytechnic University,
Hung Hom, Hong Kong.
(Email: huangzejun@yahoo.cn)}
\and
Chi-Kwong Li\thanks{
Department of Mathematics, College of William and Mary, Williamsburg, VA 23187, USA;
Department of Mathematics, University of Hong Kong, Pokfulam, Hong Kong.
(Email: ckli@math.wm.edu)}
\and
Nung-Sing Sze\thanks{
Department of Applied Mathematics,
The Hong Kong Polytechnic University,
Hung Hom, Hong Kong.
(Email: raymond.sze@polyu.edu.hk)}
}

\maketitle

\begin{abstract}
For a positive integer $n$, let $M_n$ be the set of $n\times n$ complex matrices.
Suppose $\|\cdot\|$ is the Ky Fan $k$-norm with $1 \le k \le mn$ or
the Schatten $p$-norm with $1 \le p \le \infty$ ($p\ne 2$) on $M_{mn}$, where $m,n\ge 2$ are positive integers. 
It is shown that a linear map $\phi: M_{mn} \rightarrow M_{mn}$ satisfying
$$\|A\otimes B\| = \|\phi(A\otimes B)\| \quad \hbox{ for all } A \in M_m \hbox{ and } B \in M_n$$ 
if and only if there are unitary $U, V \in M_{mn}$ such that $\phi$
has the form $A\otimes B \mapsto U(\varphi_1(A) \otimes \varphi_2(B))V$, where
$\varphi_s(X)$ is either the identity map $X \mapsto X$ or the transposition map $X \mapsto X^t$.
The results are extended to tensor space $M_{n_1} \otimes \cdots \otimes M_{n_m}$
of higher level.  The connection of the problem to quantum information science is mentioned.
\end{abstract}

\begin{AMS}
{15A69, 15A86, 15A60, 15A18.}
\end{AMS}

\begin{keywords}
Complex matrix, linear preserver, spectral norm, Ky Fan $k$-norm, Schatten $p$-norm, tensor product.
\end{keywords}

%%%%%%%%%%%%%%%%%%%%%%%%%%%%%%%%%%%%%%%%%%%%%%%%%%%%%%%%%%%%%%%%%%%%%%%%%%%%%%%%%%%%%%%%%%%%%%%%%%%%%%%%%%%%%%%%%%%%%%%%%%%%%%%%%%%%%%%%%%%%%%%%%%%%%%%%%%%%%%%%%%%%%%%%%%%%%%%%%%%%%%%%%%%%%%%%%%%%%%%%%%%%%%%%%%%%%%%%%%%%%%%%%%%%%%%%%%%%%%%%%%%%%%%%%%%%%%%%%%%%%%%%%%%%%%%%%%%%%%%%%%%%%%%%%%%%%%%%%%%%%%%%%%%%%%%%%%%%%%%%%%%%%%%%%%%%%%%%%%%%%%%%%%%%%%%%%%%%%%%%%%%
\section{Introduction and preliminaries}
%%%%%%%%%%%%%%%%%%%%%%%%%%%%%%%%%%%%%%%%%%%%%%%%%%%%%%%%%%%%%%%%%%%%%%%%%%%%%%%%%%%%%%%%%%%%%%%%%%%%%%%%%%%%%%%%%%%%%%%%%%%%%%%%%%%%%%%%%%%%%%%%%%%%%%%%%%%%%%%%%%%%%%%%%%%%%%%%%%%%%%%%%%%%%%%%%%%%%%%%%%%%%%%%%%%%%%%%%%%%%%%%%%%%%%%%%%%%%%%%%%%%%%%%%%%%%%%%%%%%%%%%%%%%%%%%%%%%%%%%%%%%%%%%%%%%%%%%%%%%%%%%%%%%%%%%%%%%%%%%%%%%%%%%%%%%%%%%%%%%%%%%%%%%%%%%%%%%%%%%%%%

For a positive integer $n$, let $M_n$ be the set of $n\times n$ complex matrices. Now, suppose that $m,n\ge 2$ are positive integers. Then for $A \in M_m$ and $B \in M_n$, we denote by $A \otimes B \in M_{mn}$ their 
{\it tensor product} (a.k.a. the {\it Kronecker product}).
In many applied and pure studies, one  considers the tensor product
of matrices; for example, see \cite{B,Jain,M,W}. 
Most noticeably, the tensor product is often used
in quantum information science \cite{NC}.  
In a quantum system, quantum states are represented as density matrices (positive semi-definite
matrices with trace one). Suppose $A \in M_m$ and $B \in M_n$ are two quantum states in two quantum systems.
Then their tensor product $A\otimes B$ describes the joint state in the bipartite
system, in which the general states are density matrices in $M_{mn}$.
More generally, one may consider tensor states and general states in a multipartite system
$M_{n_1}\otimes \cdots \otimes M_{n_m}$ identified with $M_N$ where $N = \prod_{i=1}^m n_i$.

In general, it is relatively easy to construct and extract information
from matrices in tensor product form. For instance, the eigenvalues
(respectively, the singular values) of $A\otimes B$
have the form $a_ib_j$ with $1 \le i \le m$ and $1 \le j \le n$
if $A \in M_m$ and $B\in M_n$ have eigenvalues (respectively, singular values)
$a_1, \dots, a_m$ and $b_1, \dots, b_n$, respectively.
Thus, it is interesting to get information on the tensor space $M_{mn}$
by examining the properties of the small collection of matrices in tensor form $A\otimes B$.
In particular, if we consider a linear map $\phi: M_{mn} \rightarrow M_{mn}$ and
if one knows the images $\phi(A\otimes B)$ for $A \in M_m$ and $B\in M_n$, then
the map $\phi$ can be completely characterized 
%one has the complete information of $\phi$ on $M_{mn}$ 
as every $C \in M_{mn}$ is a linear combination of matrices in tensor form $A\otimes B$.
Nevertheless, the challenge is to use the limited information of the linear map
$\phi$ on matrices in tensor form to determine the structure of $\phi$.
In  \cite{FHLS}, we considered linear maps preserving the spectrum $\sigma(A\otimes B)$
and spectral radius $r(A\otimes B)$ of Hermitian matrices $A \in M_m$ and $B \in M_n$.
In \cite{J}, the author considered linear maps $\phi:M_{mn} \to M_{mn}$ satisfying 
%\begin{equation}\label{eq1}
$$\||A\otimes B\|| = \||\phi(A\otimes B)\|| \quad \hbox{ for all } A \in M_m \hbox{ and } B \in M_n,$$
%\end{equation}
where $\||\cdot\||$ is a certain (separability) norm defined in \cite{JK}.
This family of (separability) norms was shown to be related to the problem of detecting 
bounded entangled non-positive partial transpose states
and characterizing $k$-positive linear maps in quantum information science.

Suppose $X \in M_N$ has singular values $s_1(X) \ge \cdots \ge s_{N}(X)$.
The {\it Ky Fan $k$-norm} of $X$ is defined by
$$\|X\|_{(k)}=s_1(X)+\cdots+s_k(X).$$
The Ky Fan $1$-norm reduces to the {\it spectral norm} and the Ky Fan $N$-norm is also called
the {\it trace norm}.
For $p\geq 1 $, the {\it Schatten $p$-norm} of $X$ is defined by
$$\|X\|_p\equiv\left(\sum_{i=1}^{N} s_i(X)^p\right)^{1/p}.$$
The limiting case $p = \infty$   is just the spectral norm, $\|\cdot\|_1$ is
the   trace norm,  and $\|\cdot\|_2$ is the {\it Frobenius norm},
i.e., $\|X\|_2 = (\tr (XX^*))^{1/2}$.
In bipartite quantum systems, a well known criterion for separability of a state
is the computable cross norm (CCNR) criterion \cite{CW,R}, which
asserts that if a state (density matrix) $X$ in $M_{mn}$ is separable,
the trace norm of the realignment of $X$ (see e.g. \cite{CW}) is at most $1$.

The purpose of this paper is to study linear maps $\phi: M_{mn}\rightarrow M_{mn}$
satisfying
$$\|A\otimes B\| = \|\phi(A\otimes B)\| \quad \hbox{ for all } A \in M_m \hbox{ and } B \in M_n,$$
where $\|\cdot\|$ denotes the Ky Fan $k$-norm with $1 \le k \le mn$, or
the Schatten $p$-norm with $1 \le p \le \infty$. 

Note that even if we know that $\phi: M_{mn}\rightarrow M_{mn}$
is linear and satisfies
$\|A\otimes B\| = \|\phi(A\otimes B)\|$ for all $A \in M_m$ and $B \in M_n$,
it does not ensure that
$\|A_1\otimes B_1 + A_2\otimes B_2\| = \|\phi(A_1\otimes B_1) + \phi(A_2\otimes B_2)\|$
because $A_1\otimes B_1 + A_2\otimes B_2$ may not be of the form
$A \otimes B$. Thus, the proofs of our main results (Theorems \ref{T2.3} and \ref{T2.5}) are quite delicate as shown in the following discussion (in Section 2).
We will also extend the results to multipartite systems $M_{n_1}\otimes \cdots \otimes M_{n_m}$ in
Section 3. 

One may see \cite{A,CLS,GM,LP} and their references for some
background on linear preserver problems, and the preservers of the Ky Fan $k$-norms
and Schatten $p$-norms (without the tensor structure). It was shown in these papers that such norm preservers (except for the
 Schatten 2-norm preservers) $\phi: M_{n}\rightarrow M_{n}$ have the form
\begin{equation*}\label{eq1.1}
\phi(A)=UAV\quad {\rm or}\quad \phi(A)=UA^t V
\end{equation*}
for some unitary matrices  $U,V\in M_n$, 
where $A^t$ is the transpose of $A$.
 One can also see
\cite{FHLS, FLPS, J, LPS11} and their references for some recent results on linear preserver
problems on tensor spaces arising in quantum information science.

In our discussion, we will use $X^t$ and $X^*$ to denote the transpose and the conjugate transpose
of a square matrix $X$, respectively. For any $A \in M_m$ and $B \in M_n$,
we denote by $A\oplus B$ their direct sum.
The $n\times n$
identity matrix will be denoted by $I_n$. Denote by $E_{ij}$ the square matrix which the
$(i,j)$-entry is equal to one and all the others are equal to zero, where the size of
$E_{ij}$ should be clear in the context.

\section{Bipartite systems}
%%%%%%%%%%%%%%%%%%%%%%%%%%%%%%%%%%%%%%%%%%%%%%%%%%%%%%%%%%%%%%%%%%%%%%%%%%%%%%%%%%%%%%%%%%%%%%%%%%%%%%%%%%%%%%%%%%%%%%%%%%%%%%%%%%%%%%%%%%%%%%%%%%%%%%%%%%%%%%%%%%%%%%%%%%%%%%%%%%%%%%%%%%%%%%%%%%%%%%%%%%%%%%%%%%%%%%%%%%%%%%%%%%%%%%%%%%%%%%%%%%%%%%%%%%%%%%%%%%%%%%%%%%%%%%%%%%%%%%%%%%%%%%%%%%%%%%%%%%%%%%%%%%%%%%%%%%%%%%%%%%%%%%%%%%%%%%%%%%%%%%%%%%%%%%%%%%%%%%%%%%%

\subsection{Spectral norm}
In what follows we denote by $\|\cdot\|$ the spectral norm.
Recall that the spectral norm is the same as Ky Fan $1$-norm.
We first present the result for spectral norm.

\medskip
\begin{theorem}
\label{T1}
The following are equivalent for a linear map $\phi: M_{mn} \rightarrow M_{mn}$.
\begin{enumerate}
\item[\rm (a)] $\|\phi(A\otimes B)\| = \|A\otimes B\|$ for all $A \in M_m$ and $B \in M_n$.

\item[\rm (b)] There are unitary matrices $U, V \in M_{mn}$ such that
$$\phi(A\otimes B) = U(\varphi_1(A) \otimes \varphi_2(B))V \quad \hbox{for all $A \in M_m$ and $B \in M_n$,}$$
where $\varphi_s$ is the identity map or the transposition map $X \mapsto X^t$ for $s=1,2$.
\end{enumerate}
\end{theorem}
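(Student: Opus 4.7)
The direction $(b) \Rightarrow (a)$ is a routine verification: left and right multiplication by unitaries and the transposition map all preserve singular values, hence the spectral norm, while the spectral norm is multiplicative on tensor products, so $\|U(\varphi_1(A) \otimes \varphi_2(B))V\| = \|\varphi_1(A)\| \cdot \|\varphi_2(B)\| = \|A\| \cdot \|B\| = \|A \otimes B\|$.

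For $(a) \Rightarrow (b)$, I would first introduce the \emph{marginal maps} $\phi_1(A) := \phi(A \otimes I_n)$ and $\phi_2(B) := \phi(I_m \otimes B)$. By multiplicativity of the spectral norm on tensor products, each is a linear spectral-norm isometry --- $\phi_1 : M_m \to M_{mn}$ and $\phi_2 : M_n \to M_{mn}$. The plan then has three sub-steps. (i) Exploit the facial structure of the spectral-norm unit ball (whose extreme points are the unitaries), combined with the \emph{full} tensor norm condition, to pin down the form of $\phi_1$ and $\phi_2$, aiming at $\phi_1(A) = U_1(\varepsilon_1(A) \otimes I_n) V_1$ and analogously for $\phi_2$, with unitaries $U_1, V_1 \in M_{mn}$ and $\varepsilon_1 \in \{\mathrm{id}, (\cdot)^t\}$. (ii) For rank-one inputs $A = xy^*$ and $B = uv^*$, observe that $A \otimes B = (x \otimes u)(y \otimes v)^*$ is rank-one with $\|A \otimes B\| = \|x\|\|y\|\|u\|\|v\|$; combined with the bilinearity of $(A, B) \mapsto \phi(A \otimes B)$, I would bootstrap from the two marginals to a pointwise description of $\phi$ on all $A \otimes B$. (iii) Finally, absorb the unitary factors arising in (i) into a common outer pair $U, V \in M_{mn}$ to obtain the stated form, choosing $\varphi_s = \varepsilon_s$.

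The main obstacle is sub-step (i). A linear spectral-norm isometry from $M_m$ into the larger space $M_{mn}$ is \emph{not} rigidly determined by its domain: for any $D \in M_n$ with $\|D\| = 1$, the map $A \mapsto A \otimes D$ is a linear spectral-norm isometry, and when $D$ has rank less than $n$ this is not of the desired form $U_1(A \otimes I_n)V_1$. So the marginal condition $\|\phi_1(A)\| = \|A\|$ alone cannot force a tensor-with-identity shape, and the classical classification of norm-preservers on $M_m$ (giving the standard $X \mapsto U X V$ or $X \mapsto U X^t V$) is insufficient. The additional rigidity must come from the \emph{full} tensor condition $\|\phi(A \otimes B)\| = \|A\|\|B\|$ holding for \emph{every} $B$, which couples the two marginal structures through the bilinearity of $\phi$ on tensor products. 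Making this coupling effective --- forcing the internal ``dilation'' matrices to be (scalar multiples of) identities and ensuring that the transposition choices $\varepsilon_1, \varepsilon_2$ are consistent across all pairs $(A, B)$ --- is the delicate technical core of the proof, which explains the authors' warning that the argument is ``quite delicate''.
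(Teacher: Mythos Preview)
Your proposal identifies the right difficulty but does not close it, and the bootstrap in step~(ii) hides a real gap. Knowing the two marginals $\phi_1(A)=\phi(A\otimes I_n)$ and $\phi_2(B)=\phi(I_m\otimes B)$ does \emph{not} by linearity determine $\phi(A\otimes B)$: the matrix $A\otimes B$ is not a linear combination of matrices of the form $A'\otimes I_n$ and $I_m\otimes B'$, so there is no ``bootstrap from the two marginals'' available. Even if step~(i) succeeded and gave $\phi_1(A)=U_1(\varepsilon_1(A)\otimes I_n)V_1$ and $\phi_2(B)=U_2(I_m\otimes\varepsilon_2(B))V_2$, nothing in your outline forces $U_1=U_2$ and $V_1=V_2$, and without a common outer frame you cannot even write down a candidate for $\phi(A\otimes B)$. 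Your remark about rank-one inputs does not help here: rank-one $A,B$ span everything bilinearly, but you have no mechanism linking $\phi(xy^*\otimes uv^*)$ to the marginals.

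The paper's proof takes a concretely different route that avoids this trap. Instead of marginals, it works with the diagonal matrix units $E_{ii}\otimes E_{jj}$, which \emph{do} sit inside a single tensor factor and whose images can be simultaneously aligned. The key device is a norm-attaining-vector argument: if $\|\phi(E_{ii}\otimes E_{jj})\|=1$ and $\|\phi(E_{ii}\otimes E_{jj})+\gamma\,\phi(E_{ii}\otimes E_{ss})\|=1$ for all $|\gamma|\le 1$, then the left/right norm-attaining unit vectors of the two images must be orthogonal. Iterating this across all $(i,j)$ produces a single pair of unitaries $U,V$ with $\phi(E_{ii}\otimes E_{jj})=U(E_{ii}\otimes E_{jj})V$. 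From there the paper shows $\phi(E_{ii}\otimes B)=E_{ii}\otimes\varphi_i(B)$ with each $\varphi_i$ a spectral-norm preserver on $M_n$, hence of standard form; a connectedness/partial-trace argument then forces all the $\varphi_{i,X}$ (over varying unitary conjugates $XE_{ii}X^*$) to coincide, yielding a single $\varphi_2$ and reducing to a spectral-norm preserver $\varphi_1$ on $M_m$. The facial-structure heuristic you invoke is morally related, but the actual work is this pairwise orthogonality of norm-attaining vectors together with the continuity argument, neither of which appears in your plan.
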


\begin{proof}
The implication (b) $\Rightarrow$ (a) is obvious. 
Conversely, assume that $\|\phi(A\otimes B)\| = \|A\otimes B\|$ for all $A \in M_m$ and $B \in M_n$. 
In the following, we first show that $\phi$ maps the matrix $E_{ii}\otimes E_{jj}$ 
to $U (E_{ii} \otimes E_{jj})V$ for all $1\le i \le m$ and $1\le j \le n$,
where $U$ and $V$ are some unitary matrices.

Let $1\le i\le m$ and $1\le j,s\le n$ with $j\ne s$. Then $\phi(E_{ii} \otimes E_{jj})$ has norm one and the same is true for $\phi(E_{ii} \otimes E_{jj} + \gamma E_{ii}\otimes E_{ss})$ whenever $|\gamma| \le 1$.  
Suppose $x_j, y_j \in \IC^{mn}$ are the left and right norm attaining unit vectors of $\phi(E_{ii} \otimes E_{jj})$, i.e., $x_j^* \phi(E_{ii} \otimes E_{jj}) y_j = 1$. Then for any unitary matrices $X_j$ and $Y_j$ with $x_j$ 
and $y_j$ as their first columns, we have
$X_j^* \phi(E_{ii} \otimes E_{jj}) Y_j = [1] \oplus G_j$ for some $G_j \in M_{mn-1}$
with $\|G_j\| \le 1$. Since $\phi(E_{ii} \otimes E_{jj} + \gamma E_{ii} \otimes E_{ss})$ has norm one for all $|\gamma|\le 1$, the matrix $X_j^* \phi(E_{ii} \otimes E_{ss}) Y_j$ must have the form $[0] \oplus G_s$ for some $G_s \in M_{mn-1}$. It follows that the left and right norm attaining vectors of $\phi(E_{ii} \otimes E_{ss})$, say $x_s$ and $y_s$, must be orthogonal to $x_j$ and $y_j$ respectively. As $j$ and $s$ are arbitrary, it follows that
$\{x_1,\dots,x_n\}$ and $\{y_1,\dots,y_n\}$ are orthonormal sets.

Let $U_i$ be an $mn\times mn$ unitary matrix with $x_1,\dots,x_n$ as its first $n$ columns
and $V_i$ be an $mn\times mn$ unitary matrix with $y_1^*,\dots,y_n^*$ as its first $n$ rows.
From the above discussion, one has
$$\phi(E_{ii} \otimes E_{jj}) = U_i (E_{jj} \oplus P_{ij}) V_i\quad\hbox{for all } j = 1,\dots,n$$
for some $P_{ij} \in M_{mn-n}$ and hence
$$\phi(E_{ii} \otimes D) = U_i (D \oplus P_{i,D}) V_i\quad \hbox{for any diagonal matrix }D \in M_n$$
for some $P_{i,D} \in M_{mn-n}$. %Thus, the left and right norm attaining vectors of $\phi(E_{ii}\otimes E_{jj})$ and those of $\phi(E_{ii}\otimes E_{ss})$ are orthogonal.
%Hence, there are unitary $U_i, V_i \in M_{mn}$ and
%a linear map $\phi_i: M_{n} \rightarrow M_{mn-n}$ such that 
%$$\phi(E_{ii}\otimes D) = U_i(D \oplus \phi_i(D))V_i
%\quad \hbox{for any diagonal matrix }D\in M_n.$$
Note also that if $1\le i,r\le m$ with $i \ne r$,
then $\phi(E_{ii} \otimes D + \gamma E_{rr} \otimes D)$ has norm one for any diagonal unitary matrix $D \in M_n$ and any scalar $\gamma$ with $|\gamma| \le 1$.
Consequently, we see that the left and right norm attaining vectors of $\phi(E_{ii}\otimes D)$ and those of $\phi(E_{rr}\otimes D)$ are orthogonal.
By a similar argument, there are unitary $U, V \in M_{mn}$ such that
$$\phi(E_{ii}\otimes D) = U(E_{ii}\otimes D) V\quad\hbox{for any $1\le i\le m$ and unitary diagonal $D\in M_n$.}$$
In particular,
$$\phi(E_{ii}\otimes E_{jj}) = U(E_{ii}\otimes E_{jj})V\quad \hbox{for $1\le i\le m$ and $1\le j\le n$.}$$

For the sake of the simplicity, we assume that $U$ and $V$ are identity matrices. 
Next, we show that $\phi(E_{ii} \otimes B) = E_{ii} \otimes \varphi_i(B)$
for all $B\in M_n$, where $\varphi_i$ is a linear map on $M_n$.
For any unitary $Y \in M_n$ and $1\le i\le m$, $1\le j\le n$ we can apply the argument
in the preceding paragraphs to conclude that $\phi(E_{ii} \otimes YE_{jj}Y^*)$ has rank one. Since 
$$\|\phi(E_{ii} \otimes I_n) + \gamma \phi(E_{ii}\otimes YE_{jj}Y^*)\|  = \|\phi(E_{ii} \otimes I_n + \gamma E_{ii}\otimes YE_{jj}Y^*)\| = 1+\gamma$$
for any positive scalar $\gamma$, the left and right norm attaining vectors of $\phi(E_{ii}\otimes YE_{jj}Y^*)$
are also left and right norm attaining vectors of $\phi(E_{ii} \otimes I_n)=E_{ii} \otimes I_n$. Thus, $\phi(E_{ii} \otimes YE_{jj}Y^*)=E_{ii} \otimes Z$ for some rank one $Z\in M_n$. Since this is true
for any unitary $Y \in M_n$, by linearity of $\phi$, 
we conclude that there exists a linear map $\varphi_i:M_n \rightarrow M_n$ such that
$$\phi(E_{ii}\otimes B) = E_{ii}\otimes \varphi_i(B) 
\quad \hbox{for all $B\in M_n$}.$$  
Clearly, $\varphi_i$ preserves the spectral norm. % and $\phi_i(E_{jj}) = E_{jj}$ for all $1\le j \le n$.
Therefore, $\varphi_i$ has the form $X \mapsto W_iX\tilde{W}_i $ or $X \mapsto W_iX^t\tilde{W}_i $ for some unitary
$W_i,\tilde{W}_i \in M_n$ (e.g., see \cite{CLS} and its references). For simplicity, we may assume that 
$W_i =\tilde{W}_i= I_n$ for all $i = 1, \dots, m$.

Let $X\in M_m$ be any unitary matrix. Repeating the same argument as above, one can show that $$\phi(XE_{ii}X^* \otimes B)= U_X(E_{ii} \otimes \varphi_{i,X}(B))V_X$$
for $1\le i \le m$ and $B \in M_n$, where $U_X, V_X\in M_{mn}$ are unitary matrices depending on $X$ and $\varphi_{i,X}:M_n\to M_n$ is either the identity map or
the transposition map depending on $i$ and $X$. Moreover, since $\phi(I_{mn}) = I_{mn}$, we have $V_{X} = U_{X} ^*$.

Now we show that all the maps $\varphi_{i,X}$ are the same.
For any real symmetric $S\in M_n$ and any unitary $X \in M_m$ we have
\begin{eqnarray*}
  \phi\left( I_m \otimes S \right)
 =  \phi\left(\sum_{i=1}^m XE_{ii}X^* \otimes S \right)
  = U_X \left(\sum_{i=1}^m XE_{ii}X^* \otimes S\right)U_X^*
 =  U_X \left( I_m \otimes S \right)U_X^*.
\end{eqnarray*}
In particular, when $X=I_m$, we have $\phi\left( I_m \otimes S \right)=I_m \otimes S$.
Thus, $U_X \left( I_m \otimes S \right)U_X^*=I_m \otimes S$ and this yields that $U_X$ commutes with $I_m \otimes S$ for all real symmetric $S$. Hence, $U_X$ has the form $W_X \otimes I_n$ for some unitary $W_X \in M_m$ and
$$\phi\left(XE_{ii}X^* \otimes B \right) = (W_X E_{ii} W_X^*) \otimes \varphi_{i,X}(B) \quad\hbox{ for }\quad \hbox{$1\le i \le m$ and $B \in M_n$}.$$
Now, consider the linear maps 
${\rm tr}_1: M_{mn} \to M_n$ and  ${\rm Tr}_1: M_{mn} \to M_n$ defined by
$${\rm tr}_1(A\otimes B) = (\tr A)B \quad \hbox{ and } \quad {\rm Tr}_1(A\otimes B) = \tr_1\left( \phi(A\otimes B) \right)$$ for all
$A\in M_m$ and $B \in M_n$.
Notice that the map ${\rm tr}_1$ is known as the partial trace function
in quantum information science context.
Then $${\rm Tr}_1 \left( \phi \left(XE_{ii}X^* \otimes B \right) \right) = \varphi_{i,X}(B).$$ 
So, ${\rm Tr}_1$ induces a map $XE_{ii}X^* \mapsto \varphi_{i,X}$,
where $\varphi_{i,X}$ is either the identity map or the transpose map.
Note that ${\rm Tr}_1$ is linear and therefore continuous,  and the set 
$$\{XE_{ii}X^*: 1 \le i \le m, X^*X = I_m \} = \{ xx^* \in M_m: x^*x = 1\}$$
is connected.  So, all the maps $\varphi_{i,X}$ have to be the same. Replacing $\phi$ by the map
$A\otimes B \mapsto \phi(A\otimes B^t)$, if necessary, we may assume that this common map is the identity map. Next, using the linearity of $\phi$,
one can conclude that for every $A \in M_m$ and $B \in M_n$ we have  $$\phi \left(A \otimes B \right) = \varphi_1(A) \otimes B,$$
where $\varphi_1(A) \in M_m$ depends on $A$ only. Recall that $\varphi_1:M_m\to M_m$ is a linear map and $\|\varphi_1(A)\| = \|A\|$
for all $A \in M_m$. Hence, $\varphi_1$ has the form $A\mapsto UAV $ or $A\mapsto UA^tV $ for some unitary $U, V\in M_m$. This completes the proof.
\end{proof}

%%%%%%%%%%%%%%%%%%%%%%%%%%%%%%%%%%%%%%%%%%%%%%%%%%%%%%%%%%%%%%%%%%%%%%%%%%%%%%%%%%%%%%%%%%%%%%%%%%%%%%%%%%%%%%%%%%%%%%%%%%%%%%%%%%%%%%%%%%%%%%%%%%%%%%%%%%%%%%%%%%%%%%%%%%%%%%%%%%%%%%%%%%%%%%%%%%%%%%%%%%%%%%%%%%%%%%%%%%%%%%%%%%%%%%%%%%%%%%%%%%%%%%%%%%%%%%%%%%%%%%%%%%%%%%%%%%%%%%%%%%%%%%%%%%%%%%%%%%%%%%%%%%%%%%%%%%%%%%%%%%%%%%%%%%%%%%%%%%%%%%%%%%%%%%%%%%%%%%%%%%%
\subsection{Ky Fan $k$-norms}
We now turn to Ky Fan $k$-norms. 
Two matrices $A,B\in M_n$ are called {\it orthogonal} if $AB^*=A^*B=0$ (see \cite{LSS}).  We write $A\, \bot\, B$ to indicate that $A$ and $B$ are orthogonal. It is shown in \cite{LSS} that $A\, \bot\, B$ if and only if there are unitary matrices $U,V\in M_n$ such that $UAV=\Diag(a_1,\ldots,a_n)$ and $UBV=\Diag(b_1,\ldots,b_n)$ with $a_i,b_i\geq 0$ and $a_ib_i=0$ for $i=1,\dots,n$. The matrices $A_1,\ldots,A_t$ are said to be {\it pairwise orthogonal} if $A_i^*A_j=A_iA_j^*=0$ for any distinct $i,j\in \{1,\ldots,t\}$. In this case, there are unitary matrices $U,V\in M_n$ such  that $UA_iV=D_i$ for $i=1,\dots,t$ with each $D_i$ being nonnegative diagonal matrix and $D_iD_j=0$ for any distinct $i,j\in \{1,\ldots,t\}$.

We have the following lemmas relating to orthogonality,
which are useful in the proof of Ky Fan $k$-norm results (Theorems \ref{T2.3} and \ref{T4}).

\medskip
\begin{lemma}\label{le1}{\rm \cite{LSS}}
Let $A,B\in M_n$ be nonzero matrices. Then 
$$\|\alpha A+\beta B\|_{(k)}=|\alpha|\| A \|_{(k)}+|\beta|\| B \|_{(k)}$$ 
for every pair of complex numbers $\alpha$ and $\beta$  if and only if $A\,\bot\, B$ and $\rank A+\rank B\leq k$.
\end{lemma}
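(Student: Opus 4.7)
The backward implication is a direct computation from the simultaneous decomposition quoted just before the lemma. Given $A \bot B$ with $\rank A + \rank B \le k$, pick unitaries $U, V \in M_n$ with $UAV = \Diag(a_1, \ldots, a_n)$ and $UBV = \Diag(b_1, \ldots, b_n)$, where $a_i, b_i \ge 0$ and $a_i b_i = 0$ for every $i$. Then $U(\alpha A + \beta B)V$ is diagonal with entries $\alpha a_i + \beta b_i$; because at most one of $a_i, b_i$ is nonzero, the $i$-th singular value of $\alpha A + \beta B$ is $|\alpha| a_i + |\beta| b_i$. The rank bound guarantees at most $k$ of these are nonzero, so the Ky Fan $k$-norm collects all of them and equals $|\alpha|\|A\|_{(k)} + |\beta|\|B\|_{(k)}$.

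For the forward implication, my plan is to use the dual characterization
\[
\|X\|_{(k)} = \max\{ \mathrm{Re}\,\tr(C^* X) : C \in M_n,\ \|C\| \le 1,\ \rank C \le k \},
\]
whose maximum is attained at $C = U_k V_k^*$, where $U_k, V_k$ consist of the first $k$ columns of the SVD unitaries of $X$. Setting $\alpha = \beta = 1$ and choosing $C$ optimal for $A + B$, the equality $\|A+B\|_{(k)} = \|A\|_{(k)} + \|B\|_{(k)}$ together with the individual upper bounds $\mathrm{Re}\,\tr(C^*A) \le \|A\|_{(k)}$ and $\mathrm{Re}\,\tr(C^*B) \le \|B\|_{(k)}$ forces both individual bounds to be achieved simultaneously by the same $C$. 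Repeating with $\alpha = e^{i\theta}$ and $\beta = 1$ yields, for each $\theta$, a rank-$\le k$ contraction $C_\theta$ satisfying $\tr(C_\theta^* A) = e^{-i\theta}\|A\|_{(k)}$ and $\tr(C_\theta^* B) = \|B\|_{(k)}$. Describing how such common optimizers must align with the top singular structure of $A$ and $B$, and exploiting the rotation of the phase, one deduces that the row and column spaces of $A$ and those of $B$ are pairwise orthogonal, that is $A^* B = 0 = A B^*$, i.e., $A \bot B$.

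With $A \bot B$ in hand, the simultaneous decomposition applies again, and the singular values of $A + B$ are the union, with multiplicity, of those of $A$ and $B$. The identity $\|A + B\|_{(k)} = \|A\|_{(k)} + \|B\|_{(k)}$ then states that the top $k$ of these exhaust all the nonzero singular values, which forces $\rank A + \rank B \le k$.

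The main obstacle lies in the rigidity step inside the forward direction: translating the one-parameter family of scalar identities $\|\alpha A + \beta B\|_{(k)} = |\alpha|\|A\|_{(k)} + |\beta|\|B\|_{(k)}$ into the operator-level orthogonality $AB^* = A^*B = 0$. Because the SVD of $\alpha A + \beta B$ depends nonlinearly on $(\alpha, \beta)$, direct singular-value manipulations are awkward; the combination of the dual/variational viewpoint with phase invariance across $\theta$ is what provides the needed rigidity.
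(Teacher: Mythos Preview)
The paper does not give its own proof of this lemma; it is quoted verbatim from \cite{LSS}. So there is nothing to compare your argument against inside the paper. What follows is an assessment of your proposal on its own merits.

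Your backward implication is correct and cleanly written. Your final paragraph deriving the rank bound from orthogonality plus the additive identity at $\alpha=\beta=1$ is also fine (the phrasing ``the top $k$ exhaust all the nonzero singular values'' is slightly loose, but the underlying counting argument---that if $p$ of the top $k$ come from $A$ and $q=k-p$ from $B$, then equality in $\sum_{\text{top }k}\le \|A\|_{(k)}+\|B\|_{(k)}$ forces $\rank A\le p$ and $\rank B\le q$---is sound).

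The genuine gap is exactly where you flag it: the sentence ``Describing how such common optimizers must align \ldots\ one deduces that the row and column spaces of $A$ and those of $B$ are pairwise orthogonal'' is not a proof. You have correctly produced, for each $\theta$, a rank-$\le k$ contraction $C_\theta$ attaining the dual maximum simultaneously for $e^{i\theta}A$ and for $B$. But you have not shown how the existence of this \emph{family} forces $A^*B=AB^*=0$. This is the entire content of the forward direction, and it does not follow from general soft considerations; one needs to pin down the structure of the equality set $\{C:\|C\|\le 1,\ \rank C\le k,\ \mathrm{Re}\,\tr(C^*B)=\|B\|_{(k)}\}$ (which depends on the multiplicity of $s_k(B)$ and on whether $\rank B<k$) and then argue that no single such $C$ can realize $\tr(C^*A)$ with \emph{every} phase $e^{-i\theta}\|A\|_{(k)}$ unless the relevant singular subspaces of $A$ are orthogonal to those of $B$. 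As written, the proposal identifies the right obstacle but does not overcome it, so the forward direction remains unproved.

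If you want to push this approach through, one workable route is: fix the SVD $B=\sum_j s_j u_j v_j^*$ and show that any optimizer $C_\theta$ must satisfy $v_j^*C_\theta^* u_j=1$ for every $j$ with $s_j>0$ (using $\rank B\le k$, which you may assume by symmetry once the rank bound is available, or handle directly), hence $C_\theta u_j=v_j$ on the range of $B^*$; then the constraint $\tr(C_\theta^*A)=e^{-i\theta}\|A\|_{(k)}$ for all $\theta$, combined with $\|C_\theta\|\le 1$ and $\rank C_\theta\le k$, forces the column and row spaces of $A$ to avoid those of $B$. Alternatively, the approach in the paper's own Lemma~\ref{le2} (via \cite[Theorem 3.1]{Li}, the diagonal extremal characterization) gives a template for turning such norm identities into block-diagonal structure without going through duality.
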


Denote by $\sigma(A)$ the spectrum of a matrix $A\in M_n$. Using the same arguments as in the proof of Lemma 2 in \cite{LSS}, we have the following result.
(One can also see \cite[p.468, Problem 3]{HJ}  for part (a) of Lemma \ref{le2.3}.)

\medskip
\begin{lemma}\label{le2.3}
Let $A\in M_n$ be positive semidefinite and let $B\in M_n$ be Hermitian.
\begin{enumerate}
\item[\rm (a)]  $\sigma(AB)\subseteq  \mathbb{R}$.
\item[\rm (b)] If $\sigma(AB)=\{0\}$, then there exists a unitary $U\in M_n$ such that
$$UAU^*= \begin{bmatrix} A_1 & 0 \cr 0 & 0 \end{bmatrix} \quad\hbox{and}\quad UBU^*=\begin{bmatrix}0&X\\X^*&B_1\end{bmatrix},$$
 where $A_1\in M_s$ is invertible and $B_1\in M_{n-s}$ with $0\leq s\leq n$.
\end{enumerate}
\end{lemma}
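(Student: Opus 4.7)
The plan is to prove part (a) using the standard trick that $XY$ and $YX$ have the same nonzero spectrum, and then leverage (a) together with a direct block computation for part (b).

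For (a), since $A$ is positive semidefinite it has a positive semidefinite square root $A^{1/2}$. Writing $AB = A^{1/2}(A^{1/2}B)$ and $A^{1/2}BA^{1/2} = (A^{1/2}B)A^{1/2}$, the fact that $XY$ and $YX$ share the same nonzero eigenvalues gives
$$\sigma(AB)\setminus\{0\} = \sigma(A^{1/2}BA^{1/2})\setminus\{0\}.$$
The right-hand matrix is Hermitian, so its spectrum is real, and hence $\sigma(AB)\subseteq\mathbb{R}$.

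For (b), I would begin with the spectral theorem for $A$: there is a unitary $U$ with $UAU^* = A_1 \oplus 0$, where $A_1\in M_s$ is positive definite and $s=\rank A$. Writing the conformal block decomposition
$$UBU^* = \begin{bmatrix} C & X \\ X^* & B_1 \end{bmatrix}$$
with $C$ Hermitian, a direct computation gives
$$U(AB)U^* = \begin{bmatrix} A_1C & A_1X \\ 0 & 0 \end{bmatrix},$$
whose spectrum equals $\sigma(A_1C)\cup\{0\}$. The hypothesis $\sigma(AB)=\{0\}$ therefore forces $\sigma(A_1C)=\{0\}$.

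Now I apply part (a) to the pair $(A_1,C)$: the matrix $A_1^{1/2}CA_1^{1/2}$ is Hermitian and, by the $XY$--$YX$ argument again, has the same nonzero spectrum as $A_1C$, so its spectrum is $\{0\}$. A Hermitian matrix with spectrum $\{0\}$ is the zero matrix, and since $A_1^{1/2}$ is invertible we conclude $C=0$. This yields the desired form for $UBU^*$. The only mild subtlety is the initial block computation and the observation that $C$ is Hermitian (inherited from $B$), after which everything reduces to part (a) plus invertibility of $A_1^{1/2}$; I do not foresee a serious obstacle.
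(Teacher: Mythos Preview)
Your proof is correct and follows the standard route: part~(a) via the $XY$--$YX$ spectrum trick applied to $A^{1/2}$ and $A^{1/2}B$, and part~(b) by unitarily block-diagonalizing $A$ and then showing the Hermitian matrix $A_1^{1/2}CA_1^{1/2}$ is nilpotent, hence zero. The paper does not write out its own argument but defers to Lemma~2 of \cite{LSS} (and to \cite[p.~468, Problem~3]{HJ} for part~(a)); your proof is precisely the expected reconstruction of those references.
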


\medskip
\begin{lemma}\label{le2}
Let $1\le k\le n$ and $A, B \in M_n$ with spectral norm at most $1$.
Suppose
$$\rank A \le k, \quad \|A+\alpha B\|_{(k)}=k\quad\hbox{and}\quad
\| 2A+\alpha B\|_{(k)}=  \| A\|_{(k)}+\| A+\alpha B\|_{(k)}$$
for any unit complex number $ \alpha $.
Then $A\,\bot\, B$ and  $\sigma(A^*A)\subseteq\{0,1\}.$
\end{lemma}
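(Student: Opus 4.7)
I would begin from the dual characterization
$$\|X\|_{(k)} = \max\left\{\mathrm{Re}\,\tr(M^*XN): M, N \text{ are } n\times k \text{ matrices with } M^*M = N^*N = I_k\right\},$$
combined with the standard fact that $(M,N)$ attains this maximum for $X$ if and only if $M^*XN$ is a $k\times k$ positive semidefinite matrix of trace $\|X\|_{(k)}$. Equality in the triangle inequality $\|X+Y\|_{(k)}=\|X\|_{(k)}+\|Y\|_{(k)}$ is thus equivalent to the existence of a common maximizing pair for $X$ and $Y$. Applying this with $X=A$ and $Y=A+\alpha B$, for each unit complex $\alpha$ the hypothesis furnishes $n\times k$ isometries $M_\alpha, N_\alpha$ with
$$P_\alpha := M_\alpha^*AN_\alpha \succeq 0,\ \tr P_\alpha = \|A\|_{(k)},\qquad Q_\alpha := M_\alpha^*(A+\alpha B)N_\alpha \succeq 0,\ \tr Q_\alpha = k.$$
In particular, $M_\alpha^*BN_\alpha = \bar\alpha(Q_\alpha - P_\alpha) = \bar\alpha H_\alpha$ for some Hermitian $H_\alpha$ with $\|H_\alpha\|_\infty \le \|B\|_\infty \le 1$.

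Since $\tr P_\alpha = \|A\|_1$ (using $\rank A \le k$), an analysis of when a compression of $A$ realizes its trace norm shows that, after a unitary change of coordinates bringing $A$ to $\Diag(\Sigma, 0_{n-r})$ with $\Sigma\in M_r$ invertible and $r = \rank A$, the top-$r$ rows of $M_\alpha$ and $N_\alpha$ must coincide and form a common $r\times k$ co-isometry $W_\alpha^*$. Parametrizing
$$M_\alpha = \begin{pmatrix}W_\alpha^*\\Y_\alpha V_\alpha^*\end{pmatrix},\qquad N_\alpha = \begin{pmatrix}W_\alpha^*\\Z_\alpha V_\alpha^*\end{pmatrix},$$
where $V_\alpha$ is a $k\times(k-r)$ isometry complementing $W_\alpha$ inside $\mathbb{C}^k$ and $Y_\alpha, Z_\alpha$ are $(n-r)\times(k-r)$ isometries, and writing $B=\begin{pmatrix}B_{11}&B_{12}\\B_{21}&B_{22}\end{pmatrix}$ conformably, the Hermiticity of $\alpha M_\alpha^*BN_\alpha$ decomposes block-wise as $\alpha B_{11} = \bar\alpha B_{11}^*$ (which on evaluating at $\alpha\in\{1,i\}$ forces $B_{11}=0$) and $\alpha^2 B_{12}Z_\alpha = B_{21}^*Y_\alpha$. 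Combining the latter with the full positivity $Q_\alpha\succeq 0$ and the freedom of $Y_\alpha, Z_\alpha$ (a compatible pair must exist for \emph{every} unit $\alpha$) forces $B_{12}=0=B_{21}$. Hence $A^*B = AB^* = 0$, i.e., $A\,\bot\,B$.

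Once orthogonality is established, the characterization recalled just above the lemma produces unitaries simultaneously reducing $A, B$ to $A'\oplus 0$ and $0\oplus B'$, so that the singular values of $A+\alpha B$ form the multiset union $\{s_i(A)\}\cup\{s_j(B)\}$. The hypothesis $\|A+\alpha B\|_{(k)}=k$ combined with $\|A\|_\infty, \|B\|_\infty \le 1$ forces the top-$k$ values of this multiset all to equal $1$. Splitting the nonzero singular values of $A$ as $r_1$ copies of $1$ and $r_2$ fractional values $f_1\ge\cdots\ge f_{r_2}\in(0,1)$ and comparing top-$k$ sums for the multisets associated to $A+\alpha B$ and $2A+\alpha B$ (noting that doubling sends $f\le 1/2$ into $\le 1$ and $f>1/2$ into $(1,2)$), the triangle equality $\|2A+\alpha B\|_{(k)} = \|A\|_{(k)}+k$ collapses to
$$\sum_{i:\,2f_i>1}(f_i-1) = \sum_{i:\,2f_i\le 1}f_i,$$
whose left side is nonpositive and whose right side is nonnegative. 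Equality forces both to vanish, giving $r_2=0$ and hence $\sigma(A^*A)\subseteq\{0,1\}$.

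The main obstacle is the second paragraph: the free parameters $Y_\alpha, Z_\alpha$ vary with $\alpha$ and are coupled to the full positivity of $Q_\alpha$, so eliminating $B_{12}$ and $B_{21}$ entirely (rather than obtaining only weaker bounds) requires careful use of the fact that a compatible pair $(Y_\alpha, Z_\alpha)$ must exist simultaneously for every unit $\alpha$, not merely for a single choice. A case analysis based on the kernels of $B_{12}$ and $B_{21}$ together with the Schur complement criterion for $Q_\alpha\succeq 0$ should close this gap.
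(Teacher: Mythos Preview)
Your approach via the dual characterization of $\|\cdot\|_{(k)}$ is genuinely different from the paper's, which works instead with the singular value decomposition of $2A+\alpha B$ for varying $\alpha$, invokes an equality-case result of Li to obtain a block decomposition $A=A_1\oplus 0$, $B=B_1\oplus D$ with $A_1,A_1+B_1\succeq 0$, and then uses the spectrum of products of a positive semidefinite and a Hermitian matrix (the paper's Lemma~2.3) to force $\sigma(A_1B_1)=\{0\}$ and ultimately $A_1=I$, $B_1=0$.

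Your setup is sound through the derivation of $B_{11}=0$, and in fact already yields $\sigma(A^*A)\subseteq\{0,1\}$ without your third paragraph: from $Q_\alpha\succeq 0$, $\tr Q_\alpha=k$, and $\|B\|\le 1$ one gets that the $(2,2)$ block $R_\alpha=\alpha Y_\alpha^*B_{22}Z_\alpha$ is positive semidefinite with $\tr R_\alpha=k-\tr\Sigma$ and $\|R_\alpha\|\le 1$, hence $k-\tr\Sigma\le k-r$, forcing $\tr\Sigma=r$ and $\Sigma=I_r$.

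However, the step you flag as the main obstacle is a genuine gap, and your proposed fix (kernel analysis plus the Schur complement of $Q_\alpha$) cannot close it. The conditions you extract---namely, for every unit $\alpha$ there exist isometries $Y_\alpha,Z_\alpha$ making $P_\alpha\succeq 0$ with $\tr P_\alpha=\|A\|_{(k)}$ and $Q_\alpha\succeq 0$ with $\tr Q_\alpha=k$---are strictly weaker than the lemma's hypotheses. Concretely, take $n=3$, $k=2$, $A=E_{11}$, and
\[
B=\begin{pmatrix}0&b&0\\0&0&1\\0&0&0\end{pmatrix},\qquad 0<b\le 1.
\]
Here $r=1$, $\Sigma=I_1$, $B_{11}=0$, and for each unit $\alpha$ the choice $W_\alpha=e_1$, $V_\alpha=e_2$, $Y_\alpha=e_1$, $Z_\alpha=\bar\alpha\,e_2$ gives $P_\alpha=\Diag(1,0)$ and $Q_\alpha=I_2$, so every condition you wrote down is satisfied---yet $B_{12}=(b,0)\ne 0$ and $A^*B\ne 0$. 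The point is that this pair $(A,B)$ does \emph{not} satisfy the lemma's hypothesis $\|A+\alpha B\|_{(2)}=2$ (in fact $\|A+\alpha B\|_{(2)}=1+\sqrt{1+b^2}$): your dual reformulation records only that \emph{some} pair achieves value $k$, not that $k$ is the maximum. To finish along your lines you must bring back the full force of $\|A+\alpha B\|_{(k)}=k$ (equivalently, that the eigenvalues of $Q_\alpha$ are the actual top-$k$ singular values of $A+\alpha B$), not merely the positivity and trace of $Q_\alpha$; this is essentially where the paper's argument, via the block structure and the spectral lemma, does the real work.
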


\begin{proof}
First, let $\alpha=1$ and suppose $2A+B$  has singular value decomposition $2A+B=U_1DV_1$ 
with $D=\Diag(d_1,\ldots,d_n)$
and $d_1\geq\cdots\geq d_n\ge 0$. Then
$$D=U_1^*(2A+B)V_1^*=U_1^*AV_1^*+U_1^* (A+B)V_1^*.$$
Denote the diagonal entries of $U_1^*AV_1^*$ and $U_1^*BV_1^*$ by $a_1,\ldots,a_n$ and $b_1,\ldots,b_n$, respectively. Then we have
\begin{eqnarray*}
 \sum_{i=1}^k|a_i|\leq \|A\|_{(k)},\quad
 \sum_{i=1}^k|a_i+b_i|\leq \|A+B\|_{(k)}, \quad
\sum_{i=1}^k(2a_i+b_i)= \sum_{i=1}^kd_i=\|2A+B\|_{(k)}.
\end{eqnarray*}
It follows that
$$\sum_{i=1}^k \left( a_i + (a_i + b_i) \right) \le \sum_{i=1}^k \left( |a_i| + |a_i+b_i| \right) \le \|A\|_{(k)} + \|A+B\|_{(k)}
= \|2A+B\|_{(k)} = \sum_{i=1}^k (2a_i+b_i),$$
and so all the above inequalities are indeed equalities,  
which ensure that $a_i = |a_i|$ and $a_i+b_i = |a_i+b_i|$ are nonnegative real numbers with
$$\sum_{i=1}^ka_i = \sum_{i=1}^k |a_i| = \|A\|_{(k)}
\quad\hbox{and}\quad
\sum_{i=1}^k(a_i+b_i)=\sum_{i=1}^k |a_i+b_i| = \|A+B\|_{(k)}.$$  By Theorem 3.1 of \cite{Li}, we have  $$U_1^*AV_1^*=A_{1}\oplus A_{2}\quad {\rm and}\quad  U_1^*(A+B)V_1^*=(A_1+B_{1})\oplus (A_2+B_{2}),$$ where $A_{1}$ and $A_1+B_1$ are $k\times k$ positive semidefinite  matrices and
\begin{equation*}\label{eq.2}
\|A \|_{(k)}=\|A_1\|_{(k)}=\tr A_1,\quad\|A+B \|_{(k)}= \|A_1+B_1\|_{(k)}=\tr(A_1+B_1).
\end{equation*}
Since $\rank A\leq k$, it follows that $A_2=0$ and we may assume that $B_2=\Diag(d_{k+1},\ldots,d_n)$. 
Without loss of generality, we also assume that $U_1=V_1=I_n$ and
$$ A=A_{1}\oplus 0, \qquad B= B_{1} \oplus \Diag(d_{k+1},\ldots,d_n).$$
Now take a unit $\alpha_1\ne \pm 1$. Suppose  the singular value decomposition of $2A_1+\alpha_1 B_1$ is $2A_1+\alpha_1 B_1=U_2D_1V_2$ with $D_1=\Diag(\beta_1,\ldots,\beta_k)$ and $\beta_1\geq \cdots\geq \beta_k\geq 0$. Let $U_3=U_2^*\oplus I_{n-k}$ and $V_3=V_2^*\oplus \alpha_1^{-1}I_{n-k}$. 
Set $A_3 = U_2^*A_1V_2^*$ and $B_3 = \alpha_1 U_2^*B_1V_2^*$.
Then
\begin{eqnarray*}
\Diag(\beta_1,\ldots,\beta_k,d_{k+1},\ldots,d_{n})&=&U_3(2A+\alpha_1B)V_3 \\
&=&U_2^*A_1V_2^*\oplus 0_{n-k}+U_2^*(A_1+\alpha_1 B_1)V_2^*\oplus \Diag(d_{k+1},\ldots,d_{n}) \\
&=& A_3\oplus 0_{n-k}+(A_3+B_3)\oplus \Diag(d_{k+1},\ldots,d_{n}).
\end{eqnarray*}
First, assume that $\|2A+\alpha_1 B\|_{(k)}=\sum_{i=1}^k\beta_i$. Using the same argument as above, we see that
$A_3$ and $A_3+B_3$ are $k\times k$ positive semidefinite matrices with $\|A \|_{(k)}=\tr A_3$ and $\|A+\alpha_1B \|_{(k)}=\tr (A_3+B_3)$.
By Lemma \ref{le2.3}(a), $\sigma(A_1B_1) \subseteq \IR$ and $\sigma(A_3 B_3) \subseteq \IR$. Also
$$\sigma(\alpha_1 A_3 B_3) 
= \sigma(\alpha_1 A_3 B_3^*) 
= \sigma(\alpha_1 (U_2^* A_1 V_2^*)(\alpha_1 U_2^* B_1 V_2)^* )
= \sigma(U_2^*A_1 B_1 U_2)
= \sigma(A_1 B_1) \subseteq \IR.$$
This implies $\sigma(A_1B_1) = \sigma(A_3 B_3) = \{0\}$.
%Since 
%$\sigma((U_2^*A_1V_2^*)(U_2^*B_1V_2^*)^*)=\sigma(U_2^*A_1B_1U_2)=\sigma(A_1B_1)$, by Lemma \ref{le2.3}, we have  $\sigma(A_1B_1) \subseteq \mathbb{R} $ and $\sigma(\alpha_1A_1B_1)=\sigma(A_3B_3) \subseteq \mathbb{R} $, which implies $\sigma(A_1B_1)=\sigma(A_3B_3)=\{0\}.$ 
According to Lemma \ref{le2.3}(b), we know that there exists a unitary $U_4$ such that $$U_4A_3U_4^*=A_4\oplus 0,\qquad U_4B_3U_4^*=\begin{bmatrix}0&X\\X^*&B_4\end{bmatrix},$$
where $A_4\in M_s$ is positive definite for some $0\leq s\leq k$, $B_4\in M_{k-s}$, and $\|A_3+B_3\|_{(k)}=\tr A_4+
\tr B_4$. Since both $A$ and $B$ have spectral norm at most $1$, the diagonal entries of $A_4$ and $B_4$ are less than or equal to one. So  $\tr (A_3+B_3)=\|A_3+B_3\|_{(k)}=\|A+\alpha_1B\|_{(k)}=k$ ensures that all the diagonal entries of $A_4$ and $B_4$ are equal to one and that the singular values of $A_3+B_3$ are all equal to one. It follows that $X=0$, $A_4=I_s$, and $B_4= I_{k-s}$, which implies $A\bot B$ and $\sigma(A^*A)\subseteq\{0,1\}.$

Now, assume that $\|2A+\alpha_1 B\|_{(k)}\ne\sum_{i=1}^k\beta_i$ and
suppose that the largest $k$ singular values of $2A+\alpha_1B$ are $\beta_1,\ldots,\beta_r,d_{k+1},\ldots,d_{k+s}$ with $r+s=k$ and $r<k$. Denote by $\tilde{A}=A_3\oplus 0$ and $\tilde{B}=B_3\oplus\Diag(d_{k+1},\ldots, d_{n})$. Applying Theorem 3.1 of \cite{Li} again, it follows that
$$ \tilde{A} =\tilde{A}[1,\ldots,r]\oplus 0_{n-r}\geq 0 
\quad \hbox{and}\quad
  \tilde{B} = \tilde{B}[1,\ldots,r]\oplus \Diag(\beta_{r+1},\ldots,\beta_k,d_{k+1},\ldots,d_n)\geq 0, $$ where $\tilde{A}[ 1,\ldots,r]$ and $\tilde{B}[1,\ldots,r]$ are the principal submatrices of $\tilde{A}$ and $\tilde{B}$ indexed by $1,\ldots,r$.  Without loss of generality, we can assume
$$A=\tilde{A}[1,\ldots,r]\oplus 0_{n-r}
\quad \hbox{and}\quad
B = \tilde{B}[1,\ldots,r]\oplus \Diag(\beta_{r+1},\ldots,\beta_k,d_{k+1},\ldots,d_n).$$
Choosing a unit $\alpha_2\ne \{\pm 1,\pm\alpha_1\}$ and repeating the above process at most $n-2$ times, we can show that $A\bot B$ and $\sigma(A^*A)\subseteq\{0,1\}.$
\end{proof}

\begin{lemma}\label{2.5}
Let $1\le k \le n$ and $A,B \in M_n$ such that $A$ has rank at least $k$ and $\sigma(A^*A) \subseteq \{0,1\}$.
If
$$\|A + \alpha B\|_{(k)} = k$$
for all unit complex numbers $\alpha$, then $A \,\bot\, B$.
\end{lemma}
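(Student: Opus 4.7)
The plan is to reduce $A$ to canonical form and then constrain $B$ block by block. Since $\sigma(A^*A) \subseteq \{0,1\}$, the nonzero singular values of $A$ are all equal to one; writing $r := \rank A \ge k$, the singular value decomposition lets us choose unitaries $U, V$ with $U^* A V = I_r \oplus 0_{n-r}$. As every Ky Fan norm is unitarily invariant and the orthogonality relation $A \,\bot\, B$ is preserved under the substitution $A, B \mapsto U^*AV,\, U^*BV$, we may assume $A = I_r \oplus 0_{n-r}$ at the outset, write $B$ in the corresponding block form $\begin{pmatrix} B_{11} & B_{12} \\ B_{21} & B_{22} \end{pmatrix}$, and aim to show $B_{11} = 0$, $B_{12} = 0$, and $B_{21} = 0$.

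First I would establish $B_{11} = 0$. For any rank-$k$ orthogonal projection $P$ whose range lies in the range of $A$ one has $PAP = P$, so $P(A + \alpha B)P = P + \alpha PBP$. The standard compression bound $\|PXP\|_1 \le \|X\|_{(k)}$ applied to $X = A + \alpha B$ gives $\|P + \alpha PBP\|_1 \le k$ for every unit $\alpha$; pairing $\alpha$ with $-\alpha$ in the triangle inequality $\|2P\|_1 \le \|P + \alpha PBP\|_1 + \|P - \alpha PBP\|_1$ forces $\|P + \alpha PBP\|_1 = k$. Since $\tr(P + \alpha PBP) = k + \alpha \tr(PBP)$ and $|\tr M| \le \|M\|_1$, the bound $|k + \alpha \tr(PBP)| \le k$ over all unit $\alpha$ yields $\tr(PBP) = 0$, so $|\tr(P + \alpha PBP)| = \|P + \alpha PBP\|_1 = k$. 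The equality case in $|\tr M| \le \|M\|_1$ (with $\tr M > 0$) forces $P + \alpha PBP$ to be positive semidefinite, hence Hermitian; comparing this requirement at $\alpha = 1$ and $\alpha = i$ reveals that $PBP$ is simultaneously Hermitian and anti-Hermitian, so $PBP = 0$. Letting $P$ range over all rank-$k$ projections inside the range of $A$, one deduces $v^* B v = v^*(PBP)v = 0$ for every unit $v$ in the range of $A$ by choosing $P$ to contain $v$; a polarization identity then yields $B_{11} = 0$.

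With $B_{11} = 0$ in hand, a direct calculation gives $A(A + \alpha B) = \begin{pmatrix} I_r & \alpha B_{12} \\ 0 & 0 \end{pmatrix}$, whose singular values are $\sqrt{1 + s_i(B_{12})^2}$ for $i = 1, \dots, r$ and zero otherwise. The contractive estimate $\|AX\|_{(k)} \le \|A\|_\infty \|X\|_{(k)} = k$ then yields $\sum_{i=1}^k \sqrt{1 + s_i(B_{12})^2} \le k$; since every summand is at least $1$, each must equal $1$, forcing $B_{12} = 0$. A symmetric argument with $(A + \alpha B) A$ gives $B_{21} = 0$, so $B = 0 \oplus B_{22}$, which is exactly $A^* B = A B^* = 0$, i.e., $A \,\bot\, B$. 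I expect the main obstacle to lie in the middle paragraph: upgrading the easy consequence $\tr(PBP) = 0$ to the full identity $PBP = 0$ requires the rigidity statement that a matrix $M$ with $\tr M > 0$ satisfying $\|M\|_1 = |\tr M|$ is necessarily positive semidefinite, after which the freedom to vary the phase of $\alpha$ closes the argument.
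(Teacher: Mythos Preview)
Your argument is correct. It is close in spirit to the paper's proof but differs in the machinery used, so a short comparison is worthwhile.

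Both proofs begin by normalizing $A$ to $I_r\oplus 0$ with $r\ge k$ and then exploit the hypothesis for all unit $\alpha$. The paper works with a fixed $k$-element coordinate set $\{p_1,\dots,p_k\}\subseteq\{1,\dots,r\}$, uses the diagonal inequality $\|X\|_{(k)}\ge\sum_{j=1}^k|X_{p_jp_j}|$, and then invokes \cite[Theorem~3.1]{Li} (a structural characterization of matrices whose diagonal attains the Ky Fan $k$-norm) to deduce directly that $A+\alpha B=C_\alpha\oplus D_\alpha$ with $C_\alpha\in M_k$ positive semidefinite; varying the coordinate set then yields $B=0_r\oplus B_2$ in one stroke. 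Your route replaces the citation of \cite{Li} by two self-contained steps: first, the compression argument with general rank-$k$ projections $P$ and the elementary rigidity ``$\tr M=\|M\|_1>0\Rightarrow M\ge 0$'' gives $PBP=0$, hence $B_{11}=0$ via polarization; second, the off-diagonal blocks $B_{12},B_{21}$ are eliminated by the neat observation that $A(A+\alpha B)$ has singular values $\sqrt{1+s_i(B_{12})^2}$, which the bound $\|A(A+\alpha B)\|_{(k)}\le\|A+\alpha B\|_{(k)}=k$ forces to be identically $1$. The paper's approach is shorter once \cite{Li} is available and handles the diagonal and off-diagonal blocks simultaneously; yours avoids the external reference and is arguably more transparent, at the cost of treating the two block types separately.
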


%\pagebreak
\begin{proof}
Without loss of generality, we may assume that $A = I_s \oplus 0$ with $s \ge k$.
Denote $A = \left[a_{ij}\right]$ and $B = \left[b_{ij}\right]$.
We claim that for any arbitrary and fixed $1\le p_1 < \cdots < p_k \le s$,
$b_{ij} = 0$ whenever $\{i,j\} \cap \{p_1,\dots,p_k\} \ne \emptyset$.
If the claim holds, then $B$ has the from $0_s \oplus B_2$ and therefore $A\, \bot\, B$.
To prove the claim, it suffices to show the case when $(p_1,\dots,p_k) = (1,\dots,k)$.
Notice that 
$$k = \|A + \alpha B\|_{(k)} \ge \sum_{j=1}^k \left|a_{jj} + \alpha b_{jj}\right|
\ge \left| \sum_{j=1}^k (a_{jj} + \alpha b_{jj})\right|
= \left| k + \alpha \sum_{j=1}^k b_{jj}\right|.$$
The two equalities holds for all complex unit $\alpha$ if and only if $b_{jj} = 0$ for all $1\le j \le k$.
Thus, we have $\|A+\alpha B\|_{(k)} = k = \sum_{j=1}^k |a_{jj} + \alpha b_{jj}|$
for all complex unit $\alpha$.
Now by \cite[Theorem 3.1]{Li},
$$A + \alpha B = C_\alpha \oplus D_\alpha$$
where $C_\alpha$ is positive semidefinite and $\|C_\alpha\|_{(k)} = k$.
It follows that $B = B_1 \oplus B_2$ with $B_1 \in M_k$.
Notice that $C_\alpha = I_k + \alpha B_1$ is positive semidefinite for all complex unit $\alpha$.
Then $B_1$ has to be the zero matrix, i.e., $B = 0_k \oplus B_2$.
Therefore, the claim holds and the result follows.
\end{proof}

\medskip
We now present of the result for the Ky Fan $k$-norm.
\begin{theorem}
\label{T2.3}
Let $m,n\geq 2$ and $2\leq k\leq mn$ and $\phi: M_{mn} \rightarrow M_{mn}$ be a linear map.
The following are equivalent.
\begin{enumerate}
\item[\rm (a)] $\|\phi(A\otimes B)\|_{(k)} = \|A\otimes B\|_{(k)}$ for all $A \in M_m$ and $B \in M_n$.

\item[\rm (b)] There are unitary matrices $U, V \in M_{mn}$ such that
$$\phi(A\otimes B) = U(\varphi_1(A) \otimes \varphi_2(B))V \quad\hbox{for all}\quad A \in M_m \hbox{ and }B \in M_n,$$
where $\varphi_s$ is the identity map or the transposition map $X \mapsto X^t$ for $s=1,2$.
\end{enumerate}\end{theorem}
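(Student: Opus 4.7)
The direction (b)$\Rightarrow$(a) is routine, since the maps $A\mapsto UAV$ and $A\mapsto A^t$ preserve all singular values. For (a)$\Rightarrow$(b), my plan is to follow the general blueprint of the proof of Theorem \ref{T1}, replacing the norm-attaining-vector arguments there with orthogonality arguments based on Lemmas \ref{le1}, \ref{le2}, and \ref{2.5}.

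The first phase reduces to the situation $\phi(E_{ii}\otimes E_{jj}) = E_{ii}\otimes E_{jj}$ for all $i,j$, after absorbing suitable unitaries. As a normalization, since $\|\phi(I_{mn})\|_{(k)} = \|I_{mn}\|_{(k)} = k$ holds for every $1\le k\le mn$, every singular value of $\phi(I_{mn})$ must equal $1$; thus $\phi(I_{mn})$ is unitary and, replacing $\phi$ by $A\mapsto \phi(I_{mn})^*\phi(A)$, I may assume $\phi(I_{mn})=I_{mn}$. Next, for fixed $i$ and $j\ne j'$, the combination $E_{ii}\otimes(\alpha E_{jj}+\beta E_{j'j'})$ is of tensor form, so Ky Fan $k$-norm preservation and linearity give
\[
\bigl\|\alpha\,\phi(E_{ii}\otimes E_{jj}) + \beta\,\phi(E_{ii}\otimes E_{j'j'})\bigr\|_{(k)} = |\alpha|+|\beta|
\]
for all scalars $\alpha,\beta$ (here $k\ge 2$ is essential), and Lemma \ref{le1} then delivers $\phi(E_{ii}\otimes E_{jj})\,\bot\,\phi(E_{ii}\otimes E_{j'j'})$ together with a rank bound; the symmetric argument across rows gives $\phi(E_{ii}\otimes E_{jj})\,\bot\,\phi(E_{i'i'}\otimes E_{jj})$. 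To promote this to orthogonality of arbitrary pairs, I would combine row/column orthogonality with the normalization $\sum_{i,j}\phi(E_{ii}\otimes E_{jj})=I_{mn}$: the disjoint-support diagonal decompositions produced by Lemma \ref{le1} within each row and column must piece together into a single global bi-unitary diagonalization, since the only nonnegative diagonal matrices summing to $I_{mn}$ whose supports are pairwise disjoint along both axes form a grid of rank-one projections.

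With global pairwise orthogonality in hand, Lemma \ref{2.5} applied to the test matrix $E_{ii}\otimes I_n$ (for the ranges of $k$ where $\rank\ge k$) and Lemma \ref{le2} applied to rank-constrained images (for the complementary ranges) force each $\phi(E_{ii}\otimes E_{jj})$ to be rank one; combined with the normalization this yields unitaries $U,V\in M_{mn}$ with $\phi(E_{ii}\otimes E_{jj}) = UE_{\pi(i,j),\pi(i,j)}V$ for some bijection $\pi$, which further Ky Fan norm identities on $E_{ii}\otimes I_n$ and $I_m\otimes E_{jj}$ force to split as $\pi(i,j)=(\pi_1(i),\pi_2(j))$. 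After absorbing $U$, $V$ and the permutations, the proof closes as in Theorem \ref{T1}: applying the same orthogonality machinery to the family $\{E_{ii}\otimes YE_{jj}Y^*\}_j$ for unitary $Y\in M_n$ yields $\phi(E_{ii}\otimes B) = E_{ii}\otimes\varphi_i(B)$ for a linear Ky Fan $k$-norm preserver $\varphi_i$ on $M_n$, which by the classical result of \cite{A,CLS,GM,LP} must be of the form $X\mapsto W_iX\tilde{W}_i$ or $X\mapsto W_iX^t\tilde{W}_i$; swapping the two tensor factors and invoking the partial-trace connectivity argument from the end of the proof of Theorem \ref{T1} to equate the types across all $i$ then finishes the proof.

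The main obstacle I expect is the transition from row-and-column orthogonality to global pairwise orthogonality combined with the exact rank-one conclusion: the test matrices providing the Ky Fan identities in Lemmas \ref{le2} and \ref{2.5} depend delicately on the relative sizes of $k$, $n$, and $mn$, so a careful case split by the value of $k$ (e.g.\ $k\le n$, $n<k\le (m-1)n$, and $k>(m-1)n$) will almost certainly be unavoidable and constitutes the bulk of the genuine work.
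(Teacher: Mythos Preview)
Your overall architecture matches the paper's---establish pairwise orthogonality of the images $\phi(E_{ii}\otimes E_{jj})$, deduce they are rank one, then finish as in Theorem~\ref{T1}---but there are two concrete problems with your execution.

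First, the normalization step is wrong: the hypothesis gives $\|\phi(I_{mn})\|_{(k)}=k$ for \emph{one fixed} $k$, not for every $k$, so you cannot conclude that all singular values of $\phi(I_{mn})$ equal $1$. This undermines the identity $\sum_{i,j}\phi(E_{ii}\otimes E_{jj})=I_{mn}$ on which your combinatorial ``grid'' argument rests. Second, and more importantly, the paper does not use any such combinatorial piecing-together to pass from row/column orthogonality to full pairwise orthogonality. Instead it uses a short doubling trick: for $i\ne r$, $j\ne s$, set
\[
G=\phi\bigl(E_{ii}\otimes(E_{jj}+E_{ss})\bigr),\qquad H=\phi\bigl(E_{rr}\otimes(E_{jj}+E_{ss})\bigr).
\]
These are images of tensor products, so all the needed Ky Fan identities are available. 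For $k\ge 4$ one has $\|\alpha G+\beta H\|_{(k)}=2|\alpha|+2|\beta|$ and Lemma~\ref{le1} gives $G\,\bot\,H$ directly; for $k\le 3$ the row-orthogonality already established forces $\|G\|,\|H\|\le 1$ and $\rank G\le k$, and then Lemma~\ref{le2} applies. In either case $G\,\bot\,H$ together with the within-row orthogonalities yields $\phi(E_{ii}\otimes E_{jj})\,\bot\,\phi(E_{rr}\otimes E_{ss})$. So the case split is simply $k\le 3$ versus $k>3$, far simpler than the $k$ versus $n$ trichotomy you anticipated; Lemma~\ref{2.5} is not needed here at all (it is used only in the multipartite Theorem~\ref{T4}). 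Once all $mn$ images are pairwise orthogonal matrices of Ky Fan $k$-norm $1$ in $M_{mn}$, the rank-one conclusion and the form $U(E_{ii}\otimes E_{jj})V$ are automatic, with no permutation-splitting argument required.
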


\begin{proof}
The implication (b) $\Rightarrow$ (a) is obvious. Conversely, assume that $\|\phi(A\otimes B)\|_{(k)} = \|A\otimes B\|_{(k)}$ for  all $A \in M_m$ and $B \in M_n$.  We assert  that there exist unitary $U,V\in M_{mn}$ such that
\begin{equation}\label{eq.4}
\phi(E_{ii}\otimes E_{jj}) = U(E_{ii}\otimes E_{jj})V\quad {\rm for}~  1\le i\le m  ~{\rm and}~  1\le j\le n.
 \end{equation}
It suffices to show that
\begin{equation}\label{eq.5}
\phi(E_{ii}\otimes E_{jj})\, \bot\,  \phi(E_{rr}\otimes E_{ss})
\end{equation}
 for any distinct pairs $(i,j)$ and $(r,s)$ with $1\leq i,r\leq m$ and $1\leq j,s\leq n$.
We distinguish three cases.

\vspace{0,2cm}

\noindent
{\it Case 1.} Suppose $i=r$ or $j=s$. We have $$ \|\alpha\phi(E_{ii}\otimes E_{jj})+ \beta\phi(E_{rr}\otimes E_{ss})\|_{(k)}=|\alpha|\|\phi(E_{ii}\otimes E_{jj}) \|_{(k)}+|\beta|\| \phi(E_{rr}\otimes E_{ss})\|_{(k)}$$
for all complex numbers $\alpha$ and $\beta$. Applying Lemma \ref{le1}, we have (\ref{eq.5}) and
\begin{equation}\label{eq5}
\rank \phi(E_{ii}\otimes E_{jj})+\rank \phi(E_{rr}\otimes E_{ss})\leq k.
\end{equation}

\vspace{0,2cm}

\noindent
{\it Case 2.} Suppose $i\ne r$ and $j\ne s$.
Let
$$G=\phi( E_{ii} \otimes (E_{jj} +   E_{ss}))
\quad {\rm and}\quad   H=\phi( E_{rr} \otimes (E_{jj} +   E_{ss})).$$ 

\vspace{0,1cm}

{\it Subcase 2.a.} Assume first that $k\le3$. By Case $1$, $\phi( E_{ii} \otimes E_{jj})$ and
$\phi( E_{ii} \otimes E_{ss})$ are orthogonal with $\|\phi( E_{ii} \otimes E_{jj})\|_{(k)}=
\|\phi( E_{ii} \otimes E_{ss})\|_{(k)}=1$. So, $\|G\| \le 1$ and $\rank G\leq k$. Similarly, 
$\|H\| \le 1$ and $\rank H\leq k$. Furthermore, $\|G+\gamma H\|_{(k)}=k$,   and 
$\|2G+\gamma H\|_{(k)}=\| G \|_{(k)}+\| G+\gamma H\|_{(k)}$
for all complex units $\gamma$.  Applying Lemma \ref{le2}, we get $G\, \bot\, H$.
Because $\phi(E_{ii} \otimes E_{jj})\, \bot\, \phi(E_{ii} \otimes E_{ss})$
and $\phi(E_{rr} \otimes E_{jj})\, \bot\, \phi(E_{rr} \otimes E_{ss})$,
we have (\ref{eq.5}).

\vspace{0,1cm}

{\it Subcase 2.b.} Now, suppose that $k > 3$. We have
$$\| \alpha G+\beta H \|_{(k)}=|\alpha|\|G\|_{(k)}+ |\beta|\|H\|_{(k)} $$
for all complex numbers $\alpha,\beta$.
Applying Lemma \ref{le1} again, we get
 $G\,\bot\, H$ and, hence, (\ref{eq.5}) follows.

\vspace{0,2cm}

From above, we showed that (\ref{eq.4}) holds.
For the sake of the simplicity, we assume that $U$ and $V$ are identity matrices. Now, for any unitary $Y \in M_n$ and $1\le i\le m$, $1\le j\le n$ we can apply the argument
in the preceding paragraphs to conclude that $\phi(E_{ii} \otimes YE_{jj}Y^*)$ has rank one and it is orthogonal to $\phi(E_{rr} \otimes YE_{ss}Y^*)$ for any distinct pairs $(i,j)$ and $(r,s)$. 
It follows that
 $$\|\phi(E_{ii} \otimes I_n) + \gamma \phi(E_{ii}\otimes YE_{jj}Y^*)\| = \|\phi(E_{ii} \otimes I_n + \gamma E_{ii}\otimes YE_{jj}Y^*)\| = 1+\gamma$$
for any positive scalar $\gamma$. Thus, $\phi(E_{ii} \otimes YE_{jj}Y^*)=E_{ii} \otimes Z$ for some rank one $Z\in M_n$. Since this is true
for any $1\le i \le m$ and unitary $Y \in M_n$, we conclude that there exists a linear map $\varphi_i:M_n \rightarrow M_n$ such that
\begin{equation*}
\phi(E_{ii}\otimes B) = E_{ii}\otimes \varphi_i(B)
 \end{equation*}
for all complex matrices $B\in M_n$.  Clearly, $\varphi_i$ preserves the Ky Fan $k$-norm of all $B\in M_n$ and
 $\varphi_i(E_{jj}) = E_{jj}$ for all $1\le j\le n$. (Here the Ky Fan $k$-norm reduces to the trace norm if $k\ge n$.)
Hence, $\varphi_i$ has the form $X \mapsto W_iX\tilde{W}_i $ or $X \mapsto W_iX^t\tilde{W}_i $ for some unitary
$W_i,\tilde{W}_i \in M_n$. Now, we can adapt the arguments in the last two paragraphs in the proof of Theorem \ref{T1}
to obtain our conclusion.
%For the simplicity, we may assume that $W_i =\tilde{W}_i= I_n$ for all $i = 1, \dots, m$.
%Using the same arguments as in the proof of Theorem \ref{T1}, we obtain the result.
\end{proof}

\subsection{Schatten $p$-norms}
We now study the linear preserver for Schatten $p$-norms. We first present a key lemma of the result.

\medskip
\begin{lemma}\label{le3}
{\rm \cite{CM}} 
Let $T,S\in M_n$. Then
\begin{enumerate}
\item[\rm (i)] $2^{p-1}(\|T\|_p^p+\|S\|_p^p)\leq \|T+S\|_p^p+\|T-S\|_p^p\leq 2(\|T\|_p^p+\|S\|_p^p)$ if $1\leq p\leq 2$,

\item[\rm (ii)] $2(\|T\|_p^p+\|S\|_p^p)\leq \|T+S\|_p^p+\|T-S\|_p^p\leq 2^{p-1}(\|T\|_p^p+\|S\|_p^p)$ if $2\leq p<\infty$.
\end{enumerate}
If $p=2$, all equalities always hold; if $p\ne 2$, any equality holds if and only if $T^*TS^*S = S^*ST^*T =0$.

\end{lemma}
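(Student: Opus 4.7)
My plan is to reduce the two inequalities in each regime to a single comparison, and then to attack that comparison with matrix trace convexity together with a Rotfel'd-type inequality. The case $p=2$ is immediate: the parallelogram identity
$$(T+S)^*(T+S)+(T-S)^*(T-S)=2(T^*T+S^*S)$$
yields, upon taking trace, $\|T+S\|_2^2+\|T-S\|_2^2=2(\|T\|_2^2+\|S\|_2^2)$, so every inequality in the lemma collapses to equality.

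For $p\ne 2$, I first observe that the two inequalities in each regime are equivalent via the substitution $(T,S)\mapsto((X+Y)/2,(X-Y)/2)$, under which $(T+S,T-S)\mapsto(X,Y)$. Hence it suffices to establish
$$2(\|T\|_p^p+\|S\|_p^p)\le \|T+S\|_p^p+\|T-S\|_p^p \qquad (p>2),$$
together with its reverse for $1\le p<2$. Setting $A=(T+S)^*(T+S)$ and $B=(T-S)^*(T-S)$, the parallelogram identity gives $A+B=2(T^*T+S^*S)$. Since $t\mapsto t^{p/2}$ is convex on $[0,\infty)$ for $p\ge 2$, Klein's trace convexity yields
$$\tr A^{p/2}+\tr B^{p/2}\;\ge\;2\,\tr\!\left(\frac{A+B}{2}\right)^{\!p/2}=\;2\,\tr(T^*T+S^*S)^{p/2}.$$
A Rotfel'd-type trace inequality applied to the same convex $f(t)=t^{p/2}$ with $f(0)=0$ then produces
$$\tr(T^*T+S^*S)^{p/2}\;\ge\;\tr(T^*T)^{p/2}+\tr(S^*S)^{p/2}=\|T\|_p^p+\|S\|_p^p,$$
and chaining the two inequalities gives the desired bound. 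For $1\le p<2$ the function $t^{p/2}$ is concave and vanishes at $0$, so both trace inequalities reverse and the symmetric statement follows.

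The main obstacle is the Rotfel'd step: because $T^*T$ and $S^*S$ generally fail to commute, this is a genuinely operator-theoretic fact, resting on the weak-majorization comparison between the eigenvalues of $T^*T+S^*S$ and the concatenated eigenvalues of $T^*T, S^*S$ (with the direction determined by convexity vs.\ concavity of $t^{p/2}$). For the equality clause when $p\ne 2$, strict convexity/concavity of $t^{p/2}$ turns the Rotfel'd step into a characterization: equality there forces the ranges of $T^*T$ and $S^*S$ to be mutually orthogonal, which is exactly the condition $T^*T\,S^*S=S^*S\,T^*T=0$. Conversely, this orthogonality permits a common right-unitary putting $T=[T_0\ 0]$ and $S=[0\ S_0]$, and a direct computation then gives $\|T+S\|_p^p=\|T-S\|_p^p=\|T\|_p^p+\|S\|_p^p$, returning the tight equality. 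I expect pinning down the ``if and only if'' direction for the operator-Rotfel'd equality case to be the most delicate part of the whole argument.
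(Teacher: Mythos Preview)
The paper does not prove this lemma; it is quoted from McCarthy's paper \cite{CM} and used as a black box, so there is no in-paper argument to compare against. Your route to the inequalities is sound and is one of the standard alternatives to McCarthy's original interpolation argument: the substitution reducing the two bounds to one, the trace-Jensen (Klein) step, and the superadditivity $\tr(P+Q)^{p/2}\ge \tr P^{p/2}+\tr Q^{p/2}$ for $p\ge 2$ (reversed for $1\le p\le 2$) are all correct. You are right that the last step is the crux; note, though, that for $p\ge2$ this is \emph{not} Rotfel'd's theorem (which is the concave/subadditive direction), and the convex/superadditive statement fails for general convex $f$ with $f(0)=0$ --- it does hold for the powers $t^r$, $r\ge1$, but needs its own justification, so be careful how you phrase and cite it.

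The genuine gap is in your equality analysis. Equality in the chain requires equality in \emph{both} steps, and you discuss only the second; equality in the Klein step (with strictly convex $t^{p/2}$) forces $(T+S)^*(T+S)=(T-S)^*(T-S)$, i.e.\ $T^*S+S^*T=0$, which you never use. More seriously, your ``conversely'' computation is wrong: from $T^*TS^*S=0$ you correctly obtain a common right unitary giving $T=[\,T_0\ \ 0\,]$, $S=[\,0\ \ S_0\,]$, but then $\|T\pm S\|_p^p=\tr(T_0T_0^*+S_0S_0^*)^{p/2}$, and this equals $\|T\|_p^p+\|S\|_p^p$ only when $T_0T_0^*$ and $S_0S_0^*$ also have orthogonal ranges, i.e.\ when additionally $TT^*SS^*=0$. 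Concretely, take $T=E_{12}$ and $S=E_{11}$ in $M_2$: here $T^*TS^*S=E_{22}E_{11}=0$, yet $\|T\pm S\|_p^p=2^{p/2}\ne 2=\|T\|_p^p+\|S\|_p^p$ for $p\ne2$. (This example also shows that the equality clause as printed is too weak, since $T^*TS^*S=0$ and $S^*ST^*T=0$ are adjoints of one another and hence a single condition; the correct characterization is $T^*TS^*S=TT^*SS^*=0$, equivalently $T\perp S$, and Remark~\ref{rem7} in effect invokes both by applying the hypothesis once to $(T,S)$ and once to $(T^*,S^*)$.) You should repair the sufficiency direction accordingly.
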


\medskip
\begin{remark}\label{rem7}\em
Suppose $p\ne 2$ and the singular values of $T$ and $S$ are $\{t_1,\ldots,t_n\}$ and $\{s_1,\ldots,s_n\}$, respectively. 
If any of equality in Lemma \ref{le3} holds, then $T^*TS^*S=S^*ST^*T=0$ implies $T^*T\bot S^*S$. So there exists unitary $U\in M_n$ such that $U^*T^*TU=\Diag(t_1^2,\ldots,t_n^2)$ and $U^*S^*SU=\Diag(s_1^2,\ldots,s_n^2)$ with $s_it_i=0$ for $i=1,\ldots,n$.  Replacing $T$ and $S$ with $T^*$ and $S^*$, we get $TT^*SS^*=0$ and there exists unitary $V\in M_n$ such that $V^*TT^*V=\Diag(t_1^2,\ldots,t_n^2)$ and $V^*SS^*V=\Diag(s_1^2,\ldots,s_n^2)$. It follows that $V^*TU=\Diag( t_1,\ldots, t_n )$ and $V^*SU=\Diag( s_1,\ldots, s_n )$, which implies $T\, \bot\, S$.
\end{remark}

\medskip
\begin{theorem}
\label{T2.5}
Let $ 1\leq p <\infty$ and $p\ne 2$ and $\phi: M_{mn} \rightarrow M_{mn}$ be a linear map.
Then the following are equivalent.
\begin{enumerate}
\item[\rm (a)] $\|\phi(A\otimes B)\|_p = \|A\otimes B\|_p$ for all $A \in M_m$ and $B \in M_n$.

\item[\rm (b)] There are unitary matrices $U, V \in M_{mn}$ such that
$$\phi(A\otimes B) = U(\varphi_1(A) \otimes \varphi_2(B))V\quad\hbox{for all}\quad A \in M_m \hbox{ and } B \in M_n,$$
where $\varphi_s$ is the identity map or the transposition map $X \mapsto X^t$ for $s=1,2$.
\end{enumerate}
\end{theorem}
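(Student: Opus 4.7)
The plan is to mirror the proof of Theorem~\ref{T2.3}, replacing Lemma~\ref{le1} by Clarkson's inequality (Lemma~\ref{le3}) together with its equality case (Remark~\ref{rem7}). Since $p\ne 2$, these combine into the following orthogonality criterion: whenever $T,S\in M_{mn}$ satisfy
$$\|T+S\|_p^p+\|T-S\|_p^p=2\bigl(\|T\|_p^p+\|S\|_p^p\bigr),$$
then $T\,\bot\, S$. The central step is to prove that the images $\phi(E_{ii}\otimes E_{jj})$ are pairwise orthogonal for all $1\le i\le m$ and $1\le j\le n$. Once that is established, $mn$ nonzero pairwise orthogonal matrices in $M_{mn}$ must each have rank one, so after absorbing a pair of unitaries we may assume $\phi(E_{ii}\otimes E_{jj})=E_{ii}\otimes E_{jj}$; the remainder of the argument (extending to $\phi(E_{ii}\otimes B)=E_{ii}\otimes\varphi_i(B)$ via the analogous reasoning for any other orthogonal basis $\{YE_{jj}Y^*\}$ of rank-one projections in $M_n$, invoking the standard form of Schatten $p$-norm preservers on $M_n$ from \cite{A,CLS} to identify each $\varphi_i$, and running the connectedness argument of Theorem~\ref{T1} over the rank-one projections of $M_m$ to globalize) then goes through verbatim.

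For the case $i=r$ or $j=s$ the orthogonality falls out of the criterion directly. Applying it to $T=\phi(E_{ii}\otimes E_{jj})$ and $S=\alpha\,\phi(E_{ii}\otimes E_{ss})$ with $j\ne s$, one has $T\pm S=\phi\bigl(E_{ii}\otimes(E_{jj}\pm\alpha E_{ss})\bigr)$, whose $p$-norm is $(1+|\alpha|^p)^{1/p}$ by hypothesis (a); the identity reduces to $2(1+|\alpha|^p)=2(\|T\|_p^p+\|S\|_p^p)$, and Remark~\ref{rem7} yields $\phi(E_{ii}\otimes E_{jj})\,\bot\,\phi(E_{ii}\otimes E_{ss})$. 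The case of a common second index is symmetric.

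The main obstacle is the case $i\ne r$ and $j\ne s$, since $E_{ii}\otimes E_{jj}+\alpha E_{rr}\otimes E_{ss}$ is not of the form $A\otimes B$ and hypothesis (a) does not apply to it directly. My trick is to bundle the four images into
$$T_1=\phi\bigl(E_{ii}\otimes(E_{jj}+\beta E_{ss})\bigr)\qquad\hbox{and}\qquad T_2=\phi\bigl(\alpha E_{rr}\otimes(E_{jj}+\beta E_{ss})\bigr),$$
so that $T_1\pm T_2=\phi\bigl((E_{ii}\pm\alpha E_{rr})\otimes(E_{jj}+\beta E_{ss})\bigr)$, both of $p$-norm $\bigl((1+|\alpha|^p)(1+|\beta|^p)\bigr)^{1/p}$ by (a), while the preceding paragraph gives $\|T_1\|_p^p=1+|\beta|^p$ and $\|T_2\|_p^p=|\alpha|^p(1+|\beta|^p)$. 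A direct check confirms $\|T_1+T_2\|_p^p+\|T_1-T_2\|_p^p=2(\|T_1\|_p^p+\|T_2\|_p^p)$, so Remark~\ref{rem7} again yields $T_1\,\bot\, T_2$. Writing $T_1=P+\beta Q$ and $T_2=\alpha(R+\beta S)$ with $P=\phi(E_{ii}\otimes E_{jj})$, $Q=\phi(E_{ii}\otimes E_{ss})$, $R=\phi(E_{rr}\otimes E_{jj})$, $S=\phi(E_{rr}\otimes E_{ss})$, and expanding $T_1^*T_2=T_1T_2^*=0$ in powers of $\beta,\bar\beta$, the same-factor cross terms $P^*R$, $Q^*S$, $PR^*$, $QS^*$ already vanish by the previous case; comparing the remaining coefficients of $\beta$ and $\bar\beta$ then forces $P^*S=PS^*=Q^*R=QR^*=0$, which is exactly $\phi(E_{ii}\otimes E_{jj})\,\bot\,\phi(E_{rr}\otimes E_{ss})$, closing the key step.
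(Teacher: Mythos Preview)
Your proposal is correct and follows essentially the same route as the paper: establish pairwise orthogonality of the $\phi(E_{ii}\otimes E_{jj})$ via the Clarkson equality case (Lemma~\ref{le3} and Remark~\ref{rem7}), handling the cross case $i\ne r$, $j\ne s$ by bundling into $\phi(E_{ii}\otimes(E_{jj}+E_{ss}))$ and $\phi(E_{rr}\otimes(E_{jj}+E_{ss}))$, and then invoking the argument of Theorem~\ref{T1}. Your introduction of the parameter $\beta$ so that $P\,\bot\,S$ falls out by comparing coefficients is a minor variant; the paper fixes $\beta=1$ and reaches the same conclusion from the already established orthogonalities $P\,\bot\,Q$, $R\,\bot\,S$, $P\,\bot\,R$, $Q\,\bot\,S$.
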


\begin{proof}
The implication (b) $\Rightarrow$ (a) is obvious. Conversely, assume that $\|\phi(A\otimes B)\|_p = \|A\otimes B\|_p$ for all $A \otimes B\in M_{mn}$. We first conclude  there exist unitary $U,V\in M_{mn}$ such that
\begin{equation}\label{eq.8}
\phi(E_{ii}\otimes E_{jj}) = U \left( E_{ii}\otimes E_{jj} \right)V \quad {\rm for}~  1\le i\le m  ~{\rm and}~  1\le j\le n.
 \end{equation}
Then we can adapt the arguments in the last three paragraphs in the proof of Theorem \ref{T1}
to verify that $\phi$ has the form claimed in (b).

As in the proof of Theorem \ref{T2.3}, to prove (\ref{eq.8}), it suffices to show that
\begin{equation}\label{eq.9}
\phi(E_{ii}\otimes E_{jj})\, \bot\, \phi(E_{rr}\otimes E_{ss})
\end{equation}
for any distinct pairs $(i,j)$ and $(r,s)$ with $1\leq i,r\leq m$ and $1\leq j,s\leq n$.

If $i=r$ or $j=s$, we have $$ \| \phi(E_{ii}\otimes E_{jj})+  \phi(E_{rr}\otimes E_{ss})\|_p^p+\| \phi(E_{ii}\otimes E_{jj})- \phi(E_{rr}\otimes E_{ss})\|_p^p= 2 \left( \|\phi(E_{ii}\otimes E_{jj}) \|_p^p+ \| \phi(E_{rr}\otimes E_{ss})\|_p^p \right).$$
Applying Lemma \ref{le3} and Remark \ref{rem7}, we get (\ref{eq.9}).
If $i\ne r$ and $j\ne s$, we have
\begin{multline*}
\| \phi(E_{ii}\otimes (E_{jj}+E_{ss}))+  \phi(E_{rr}\otimes (E_{jj}+E_{ss}))\|_p^p+\| \phi(E_{ii}\otimes (E_{jj}+E_{ss}))- \phi(E_{rr}\otimes(E_{jj}+E_{ss}))\|_p^p \cr
= 2 \left( \|\phi(E_{ii}\otimes (E_{jj}+E_{ss})) \|_p^p+ \| \phi(E_{rr}\otimes (E_{jj}+E_{ss}))\|_p^p \right).
\end{multline*}
By Lemma \ref{le3}, we have $\phi(E_{ii}\otimes (E_{jj}+E_{ss}))\,\bot\, \phi(E_{rr}\otimes (E_{jj}+E_{ss}))$.
With the fact that $\phi(E_{ii}\otimes E_{jj}) \,\bot\, \phi(E_{ii} \otimes E_{ss})$
and $\phi(E_{rr}\otimes E_{jj}) \,\bot\, \phi(E_{rr} \otimes E_{ss})$,
we conclude that (\ref{eq.9}) holds. This completes the proof.
\end{proof}

\begin{remark}\em
For $p=2$, i.e., the Frobenius norm case, the statements (a) and (b) in Theorem \ref{T2.5} are not equivalent. 
One can consider the linear map  $\psi(A)=\left[b_{ij} \right]$ for $A=\left[a_{ij}\right]\in M_{mn}$ such that $b_{1,mn}=a_{mn,1}$, $b_{mn,1}=a_{1,mn}$ and $b_{ij}=a_{ij}$ for $(i,j)\not\in \{(1,mn),(mn,1)\}$.
In fact, any linear map on $M_{mn}$ preserving the inner product $(A,B) = \tr(AB^*)$ on
$M_{mn}$ will preserve the Frobenius norm.
\end{remark}

\begin{remark} \em
Note that the maps in Theorems \ref{T1}, \ref{T2.3}, and \ref{T2.5} may not satisfy
$\|\phi(C)\| = \|C\|$ for all $C \in M_{mn}$. 
For instance, if $\phi(A\otimes B) = A \otimes B^t$ and if
$$C_r = r^2E_{11}\otimes E_{11} + r(E_{12}\otimes E_{12} + E_{21}\otimes E_{21})
+ E_{22}\otimes E_{22} \quad\hbox{for}\quad r \ge 0,$$
then $C_r$ has singular values $r^2+1, 0, \dots, 0$, and
$\phi(C_r)$ has singular values $r^2,r,r,1,0, \dots,0$.
Then for any positive number $r\ne 1$, 
$\|\phi(C_r)\| \ne \|C_r\|$
unless $\|\cdot\|$ is the Frobenious norm.
However, if we assume that $\phi$ satisfies
$\|\phi(C)\| = \|C\|$
for $C$ of the tensor form $A\otimes B$, and also for $C = C_r$ for some positive number $r\ne 1$,
%Then there is $r > 0$ such that $\|\phi(C)\| \ne \|C\|$
%unless $\|\cdot\|$ is the Frobenius norm.
%However, if we assume that $\phi$ satisfies
%$\|\phi(C)\| = \|C\|$
%for $C$ of the tensor form $A\otimes B$, and also for $C$ of the form $C_r$ for $r \in \IR$,
then one easily deduces that
both $\varphi_i$ mentioned in the theorems have to be of the same type, i.e., both are identity map, or both
are the transposition map. It follows that there are unitary $U, V \in M_{mn}$
such that $\phi$ has the form
$$X \mapsto UXV \qquad \hbox{ or } \qquad X \mapsto UX^tV$$
and hence, $\|\phi(X)\| = \|X\|$ for all $X \in M_{mn}$.
\end{remark}

%%%%%%%%%%%%%%%%%%%%%%%%%%%%%%%%%%%%%%%%%%%%%%%%%%%%%%%%%%%%%%%%%%%%%%%%%%%%%%%%%%%%%%%%%%%%%%%%%%%%%%%%%%%%%%%%%%%%%%%%%%%%%%%%%%%%%%%%%%%%%%%%%%%%%%%%%%%%%%%%%%%%%%%%%%%%%%%%%%%%%%%%%%%%%%%%%%%%%%%%%%%%%%%%%%%%%%%%%%%%%%%%%%%%%%%%%%%%%%%%%%%%%%%%%%%%%%%%%%%%%%%%%%%%%%%%%%%%%%%%%%%%%%%%%%%%%%%%%%%%%%%%%%%%%%%%%%%%%%%%%%%%%%%%%%%%%%%%%%%%%%%%%%%%%%%%%%%%%%%%%%%
\section{Multipartite systems}
%%%%%%%%%%%%%%%%%%%%%%%%%%%%%%%%%%%%%%%%%%%%%%%%%%%%%%%%%%%%%%%%%%%%%%%%%%%%%%%%%%%%%%%%%%%%%%%%%%%%%%%%%%%%%%%%%%%%%%%%%%%%%%%%%%%%%%%%%%%%%%%%%%%%%%%%%%%%%%%%%%%%%%%%%%%%%%%%%%%%%%%%%%%%%%%%%%%%%%%%%%%%%%%%%%%%%%%%%%%%%%%%%%%%%%%%%%%%%%%%%%%%%%%%%%%%%%%%%%%%%%%%%%%%%%%%%%%%%%%%%%%%%%%%%%%%%%%%%%%%%%%%%%%%%%%%%%%%%%%%%%%%%%%%%%%%%%%%%%%%%%%%%%%%%%%%%%%%%%%%%%%

In this section we   extend the previous results  to multipartite systems $\M$, $m\ge 2$.
Let $A_i\in M_{n_i}, i=1,\ldots,m$.   We denote $ \bigotimes_{i=1}^mA_i=A_1\otimes A_2\otimes\cdots\otimes A_m$.

\begin{theorem}
\label{T4}
Let $1\leq k\leq \prod_{i=1}^m n_i$ and $\phi: \N\rightarrow \N$ be a linear map.
The following are equivalent.
\begin{enumerate}
\item[\rm (a)] $\|\phi(\A)\|_{(k)} = \|\A\|_{(k)}$ for all $A_i \in M_{n_i}$, $i = 1,\dots,m$.

\item[\rm (b)] There are unitary matrices $U, V \in M_{n_1\cdots n_m}$ such that
\begin{equation}\label{eq3.1}
\phi(\A) = U(\varphi_1(A_1)\otimes \cdots\otimes\varphi_m(A_m))V \quad \hbox{for all}\quad A_i \in M_{n_i},\ i = 1,\dots,m,
\end{equation}
where $\varphi_s$ is the identity map or the transposition map $X \mapsto X^t$ for $s=1,\ldots,m$.
\end{enumerate}
\end{theorem}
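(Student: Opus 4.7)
The implication (b) $\Rightarrow$ (a) is immediate from the singular value structure of tensor products together with unitary invariance. For (a) $\Rightarrow$ (b), I would proceed by induction on $m$, with the base case $m=2$ covered by Theorem \ref{T1} (for $k=1$) and Theorem \ref{T2.3} (for $k\geq 2$). Assume the result for tensor products of $m-1$ factors, and let $\phi:\N\to\N$ satisfy (a).

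The strategy mirrors the proof of Theorem \ref{T2.3}, with extra care taken because the ``second factor'' in any bipartite grouping must itself remain a tensor product of elementary matrices. The first task is to establish the pairwise orthogonality
$$\phi\!\left(\textstyle\bigotimes_{l=1}^m E_{i_l i_l}\right)\,\bot\,\phi\!\left(\textstyle\bigotimes_{l=1}^m E_{r_l r_l}\right)$$
for every pair of distinct multi-indices $(i_1,\ldots,i_m)\ne (r_1,\ldots,r_m)$. If the two multi-indices agree in all but one position $l_0$, then $\alpha\bigotimes_l E_{i_l i_l}+\beta\bigotimes_l E_{r_l r_l}$ is itself a tensor product of $m$ matrices (with $\alpha E_{i_{l_0}i_{l_0}}+\beta E_{r_{l_0}r_{l_0}}$ in slot $l_0$), so its Ky Fan $k$-norm equals $|\alpha|+|\beta|$ for $k\geq 2$; Lemma \ref{le1} then yields orthogonality. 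When the multi-indices differ in several positions, I adapt Case 2 of the proof of Theorem \ref{T2.3} by forming auxiliary images such as $G=\phi(E_{i_1 i_1}\otimes(E_{i_2 i_2}+E_{r_2 r_2})\otimes E_{i_3 i_3}\otimes\cdots\otimes E_{i_m i_m})$ and an analogous $H$, and applying Lemma \ref{le2} (for small $k$) or Lemma \ref{le1} (for $k$ sufficiently large); chaining one-coordinate adjustments propagates the orthogonality along any pair of multi-indices.

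Once orthogonality is in hand, there exist unitaries $U,V$ with $\phi(\bigotimes_l E_{i_l i_l})=U(\bigotimes_l E_{i_l i_l})V$, and replacing $\phi$ by $X\mapsto U^*\phi(X)V^*$ I may assume $U=V=I$. Mimicking the rank-one image argument in Theorem \ref{T2.3}, I then show
$$\phi\!\left(E_{i_1 i_1}\otimes\cdots\otimes E_{i_{m-1}i_{m-1}}\otimes B\right)=E_{i_1 i_1}\otimes\cdots\otimes E_{i_{m-1}i_{m-1}}\otimes \varphi_{i_1,\ldots,i_{m-1}}(B)$$
for all $B\in M_{n_m}$, with each $\varphi_{i_1,\ldots,i_{m-1}}$ a Ky Fan $k$-norm preserver on $M_{n_m}$, hence of standard form. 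A partial trace combined with the connectedness argument from the last two paragraphs of the proof of Theorem \ref{T2.3} forces all these maps to be of the same type (identity or transposition). Absorbing unitaries, $\phi$ takes the form
$$\phi(A_1\otimes\cdots\otimes A_m)=\widetilde{\phi}(A_1\otimes\cdots\otimes A_{m-1})\otimes \varphi_m(A_m),$$
where $\widetilde{\phi}$ is a linear map on $M_{n_1\cdots n_{m-1}}$ preserving the Ky Fan $k$-norm on tensor products of $m-1$ factors. The inductive hypothesis applied to $\widetilde{\phi}$ delivers (\ref{eq3.1}).

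The main obstacle is the orthogonality step, specifically when the two multi-indices differ in many positions and $k$ is small: the auxiliary tensors $G$ and $H$ then have rank too large for a single invocation of Lemma \ref{le1}, and one must sequence Lemma \ref{le2} applications along a path of multi-indices that changes one coordinate at a time, being careful that the intermediate sums retain the tensor-product form required for the norm equality $\|G+\gamma H\|_{(k)}=k$ that drives Lemma \ref{le2}.
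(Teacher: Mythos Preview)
Your overall architecture---induction on $m$, pairwise orthogonality of the images $\phi\bigl(\bigotimes_l E_{i_l i_l}\bigr)$, then the slice map $\varphi_{i_1,\ldots,i_{m-1}}$, the partial-trace/connectedness argument, and finally the inductive hypothesis on the first $m-1$ factors---matches the paper's proof, and the latter portions are correctly sketched.

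The gap is exactly where you locate it, but your proposed fix is not quite enough. You plan to ``sequence Lemma~\ref{le2} applications'' when the multi-indices differ in many positions and $k$ is small. The trouble is that Lemma~\ref{le2} itself carries the hypothesis $\rank A\le k$. The auxiliary images that arise here are
\[
G_r=\phi\!\left(\bigotimes_{u=1}^{r-1}(E_{i_u i_u}+E_{j_u j_u})\otimes E_{i_r i_r}\otimes\bigotimes_{u=r+1}^m E_{i_u i_u}\right),
\qquad
\hat G_r=\phi\!\left(\cdots\otimes E_{j_r j_r}\otimes\cdots\right),
\]
and once $r-1>\log_2 k$ one has $\rank G_r=2^{r-1}>k$, so Lemma~\ref{le2} is no longer available. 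Chaining one-coordinate moves does not evade this, because orthogonality is not transitive and the intermediate sums you need are precisely these $G_r$. The paper closes this gap with an additional lemma (Lemma~\ref{2.5}): if $A$ has $\sigma(A^*A)\subseteq\{0,1\}$ and rank \emph{at least} $k$, and $\|A+\alpha B\|_{(k)}=k$ for every unit $\alpha$, then $A\,\bot\,B$. The argument is then a secondary induction on $r$: Lemma~\ref{le1} handles $r\le\log_2 k$ (and gives the rank bound); Lemma~\ref{le2} handles the transitional step $\log_2 k<r\le 1+\log_2 k$, simultaneously producing $G_r\,\bot\,\hat G_r$ and the information that $G_r,\hat G_r$ have only $0/1$ singular values; Lemma~\ref{2.5} then propagates both conclusions to all larger $r$. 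You need this extra lemma (or an equivalent device replacing the rank-$\le k$ hypothesis by a $0/1$-singular-value hypothesis) to complete the orthogonality step.
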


\begin{proof}
The sufficiency part is clear. To prove the necessity part, we use induction on $m$. By Theorem \ref{T1} and Theorem \ref{T2.3}, we already know that the statement
of Theorem \ref{T4}  is true for bipartite systems. So, assume that $m\ge 3$ and that the result holds for all $(m-1)$-partite systems.
We need to prove that the same is true for $m$-partite systems.

Denote $N = \prod_{i=1}^m n_i$. %Denote by $\m=n_1\cdots n_{m-1}$ and $\n= n_m$.
First we claim that there exist unitary $U,V\in M_{N}$ such that
\begin{equation}\label{eq3.11}
\phi(E_{j_1j_1} \otimes \cdots \otimes E_{j_mj_m}) = U ( E_{j_1j_1} \otimes \cdots\otimes  E_{j_mj_m} )V
\quad \hbox{for all}\quad 1\le j_i \le n_i.
\end{equation}
%for all $1\le j \le \m$ and $1\le k \le \n$.

When $k=1$, as in the proof of Theorem \ref{T1}, we can successively consider 
\begin{multline*}
\phi(E_{i_1i_1}\otimes\cdots\otimes E_{i_{m-2},i_{m-2}}\otimes E_{i_{m-1},i_{m-1}}\otimes(E_{i_mi_m}+\gamma E_{j_mj_m})), \cr
\phi(E_{i_1i_1}\otimes\cdots\otimes E_{i_{m-2},i_{m-2}}\otimes(E_{i_{m-1}i_{m-1}}+\gamma E_{j_{m-1}j_{m-1}})\otimes D_m),\quad \ldots,\cr
\phi((E_{i_1i_1}+\gamma E_{j_{1}j_{1}})\otimes D_2\otimes \cdots\otimes D_{m-2} \otimes D_{m-1} \otimes  D_m)
\end{multline*}
to obtain (\ref{eq3.11}), where $D_2,\ldots, D_m$ are arbitrary diagonal matrices.

When $k\geq 2$, as in the proof of   Theorem \ref{T2.3}, it suffices to show
\begin{equation}\label{eq3.2}
\phi(E_{i_1i_1} \otimes \cdots\otimes E_{i_mi_m})\, \bot\,  \phi(E_{j_1j_1} \otimes \cdots\otimes E_{j_mj_m})
\quad \hbox{for any}\quad (i_1,\ldots,i_m)\ne (j_1,\ldots,j_m).
\end{equation}
%Denote
%\begin{equation*}
%G=\phi\left( \bigotimes_{u=1}^{r-1}(E_{i_ui_u}+E_{j_uj_u})
%\otimes\bigotimes_{u=r }^m E_{i_ui_u}\right)
%\quad\hbox{and}\quad
%H=\phi\left( \bigotimes_{u=1}^{r-1}(E_{i_ui_u}+E_{j_uj_u})
%\otimes\bigotimes_{u=r }^m E_{j_uj_u}\right).
%\end{equation*}
To confirm (\ref{eq3.2}), it suffices  to verify that for any $1\le r \le m$,
\begin{multline}\label{eq3.3}
\hspace{-4mm}
\phi\left( \bigotimes_{u=1}^{r-1}(E_{i_u i_u}+E_{j_u j_u})
\otimes E_{i_r i_r} \otimes \bigotimes_{u=r+1}^m E_{i_u i_u}\right) %\cr
\ \bot\ 
\phi\left( \bigotimes_{u=1}^{r-1}(E_{i_u i_u}+E_{j_u j_u})
\otimes E_{j_r j_r} \otimes \bigotimes_{u=r+1}^m E_{i_u i_u}\right)
\end{multline}
for any distinct $\bi = (i_1,\ldots,i_m)$ and $\bj = (j_1,\ldots,j_m)$ with $i_u\ne j_u$, $1\le u \le r$.
%For simplicity, we may assume that $i_1 = \cdots = i_m = 1$ and $j_1 = \cdots = j_r = 2$.
%$$
%G_r = \phi\left( \bigotimes_{u=1}^{r-1}(E_{11}+E_{22})
%\otimes E_{11} \otimes \bigotimes_{u=r+1}^m E_{11}\right)
%\quad\hbox{and}\quad
%H_r = \phi\left( \bigotimes_{u=1}^{r-1}(E_{11}+E_{22})
%\otimes E_{22} \otimes \bigotimes_{u=r+1}^m E_{11}\right).$$
Denote by $G_r = G_r(\bi,\bj)$ and $\hat G_r = \hat G_r(\bi,\bj)$ the two matrices in (\ref{eq3.3}) accordingly.
We consider two cases.

\medskip
{\it Case 1.} For $r \le \log_2 k$, as
$\|  \alpha G_r+\beta \hat G_r \|_{(k)}
=|\alpha | \|  G_r   \|_{(k)}+|\beta| \| \hat  G_r  \|_{(k)}$
for all complex $\alpha$ and $\beta$.
Applying Lemma \ref{le1}, we get 
\begin{eqnarray*}
G_r\, \bot\, \hat G_r \quad\hbox{and}\quad \rank G_r + \rank \hat G_r \le k 
\quad\hbox{for all}\quad r \le \log_2 k.
\end{eqnarray*}
Now as $G_{s+1} = G_{s} + \hat G_{s}$ and $G_s \, \bot\, \hat  G_s$ for all $s \le \log_2 k$,
$$\|G_{s+1}\| \le \max\{ \|G_s\|, \|\hat G_s\|\} \le \max \left\{ \|G_s(\bi,\bj)\|:
i_u\ne j_u, 1\le u \le s \right\},$$
where $\|\cdot\|$ is the spectral norm. Hence, 
$$\max \left\{ \|G_{s+1}(\bi,\bj)\|:
i_u\ne j_u, 1\le u \le s+1 \right\}
\le \max \left\{ \|G_s(\bi,\bj)\|:
i_u\ne j_u, 1\le u \le s \right\}.$$
As the inequality holds for all $s \le \log_2 k$, it follows that
\begin{eqnarray*}
\|G_r\| \le 
\max \left\{ \|G_1(\bi,\bj)\|: i_1\ne j_1\right\} 
%\le \max \left\{ \|G_1(\bi,\bj)\|_{(k)}: i_1\ne j_1\right\} 
&=& \max \left\{ \| \phi(E_{i_1i_1} \otimes \cdots\otimes E_{i_mi_m}) \|:
(i_1,\dots,i_m) \right\} \cr
&\le& \max \left\{ \| \phi(E_{i_1i_1} \otimes \cdots\otimes E_{i_mi_m}) \|_{(k)}:
(i_1,\dots,i_m) \right\} = 1.
\end{eqnarray*}
Similarly, one conclude that $\|\hat G_r\| \le 1$.

\medskip
{\it Case 2.} For $r > \log_2 k$, we claim that $G_r$ and $\hat G_r$ are orthogonal and 
both of them have singular values $0$ and $1$ only. 
We prove the claim by induction on $r$.

Suppose $\log_2 k < r \le 1+\log_2 k$. Notice that 
$G_r = G_{r-1} + \hat G_{r-1}$. By Case 1, 
$$G_{r-1} \,\bot\, \hat G_{r-1},\quad
\|G_{r-1}\|\le 1,\quad 
\|\hat G_{r-1}\| \le 1, \quad\hbox{and}\quad
\rank G_{r-1} + \rank \hat G_{r-1}  \le k.$$ 
Therefore, $\|G_r\| \le 1$ and $\rank G_r \le k$. Since
$$\|G_r+\alpha \hat G_r\|_{(k)}=k
\quad\hbox{ and }\quad
\|  2G_r+\beta \hat G_r \|_{(k)}
=  \| G_r   \|_{(k)}+  \|G_r+\beta \hat G_r\|_{(k)},
%\|  \alpha G+2 H \|_{(k)}
% =  \|  H   \|_{(k)}+  \|\alpha G+  H\|_{(k)}
$$
for any complex unit $\alpha$,
by Lemma \ref{le2}, we obtain $G_r \, \bot\, \hat G_r$. 
Moreover, $G_r$ has singular values $0$ and $1$ only.
Similarly, one can conclude that $\hat G_r$ has singular values $0$ and $1$ only.
Now assume that the claim holds for some $r > \log_2 k$.
We will show that the claim also holds for $r+1$.
By induction assumption and the fact that $G_{r+1} = G_r + \hat G_r$,
we conclude that $G_{r+1}$ has singular values $0$ and $1$ only.
The same conclusion holds for $\hat G_{r+1}$.
By Lemma \ref{2.5} and the fact that $\|G_{r+1} + \alpha \hat G_{r+1}\| = k$ for all complex unit $\alpha$,
we get $G_{r+1} \, \bot \, \hat G_{r+1}$.
Therefore the claim holds.

\medskip
Combining the above two cases, we see that (\ref{eq3.3}) holds and hence the statement (\ref{eq3.2}) follows.
Therefore, the claim (\ref{eq3.11}) holds for all $k$.
Without loss of generality, we may assume $U=V=I_N$ in (\ref{eq3.11}). Following a similar argument as in Theorems \ref{T1} and \ref{T2.3}, 
one can conclude that
$$\phi\left(E_{j_1j_1} \otimes \cdots \otimes E_{j_{m-1}j_{m-1}} \otimes B \right)
= E_{j_1j_1} \otimes \cdots \otimes E_{j_{m-1}j_{m-1}} \otimes \varphi_{j_1,\dots,j_{m-1}}(B)$$
for all $1\le j_i \le n_i$ with $1\le i \le m-1$ and $B\in M_{n_m}$,
where $\varphi_{j_1,\dots,j_{m-1}}$ can be assumed to be either the identity map or the transposition map.
Following the same argument, we can further conclude that
for any $X=X_1\otimes \cdots\otimes X_{m-1}$ with $X_i\in M_{n_i}$ being unitary for $1\leq i\leq m-1$,
there are unitary $U_X$ and $V_X$ such that 
$$\phi\left( \left( \bigotimes_{i=1}^{m-1} X_iE_{j_ij_i}X_i^* \right) \otimes B\right)
= U_X\left( \left( \bigotimes_{i=1}^{m-1} X_iE_{j_ij_i}X_i^* \right) \otimes  \varphi_{j_1,\dots,j_{m-1},X}(B) \right)V_X$$
for all $1\le j_i \le n_i$ with $1\le i \le m-1$ and $B\in M_{n_m}$,
where $\varphi_{j_1,\dots,j_{m-1},X}$ can be assumed to be either the identity map or the transposition map,
depending on $j_1,\dots,j_{m-1}$ and $X$. By the fact that $\phi(I_N) = I_N$, we have $V_X^* = U_X$.

%Without loss of generality, we may assume $U=V=I_N$ in (\ref{eq3.11}). It follows that for any $X=X_1\otimes \cdots\otimes X_{m-1}$ with $X_k\in M_{n_k}$ being unitary for $1\leq k\leq m-1$,   $B \in M_{\n} $, and $1\le j \le \m$ we have
%$$\phi(XE_{jj}X^* \otimes B)=U_X(E_{jj} \otimes \varphi_{j,X}(B))V_X $$
%with $\varphi_{j,X}: M_{n_m}\rightarrow M_{n_m}$ the identity map or the transposition map depending on $j$ and $X$. Moreover, since %$\phi(I_{\m\n})=I_{\m\m}$, we have $V_X=U_X^*$.

Again, considering all symmetric $S\in   M_{n_m}$  as in the proof of Theorem \ref{T1}, we can show that there exists $W_X \in M_{n_1\cdots n_{m-1}}$ such that
$$\phi\left( \left( \bigotimes_{i=1}^{m-1} X_iE_{j_ij_i}X_i^* \right) \otimes B\right)
= W_X \left( \bigotimes_{i=1}^{m-1} X_iE_{j_ij_i}X_i^* \right) W_X^* \otimes  \varphi_{j_1,\dots,j_{m-1},X}(B) $$
for all $1\le j_i \le n_i$ with $1\le i \le m-1$ and $B\in M_{n_m}$.
Consider the linear map (known as the partial trace function in quantum information science context) on $M_N$
defined by $X \otimes Y \mapsto (\tr X)Y$ for $X \in M_{n_1\cdots n_{m-1}}$ and $Y \in M_{n_m}$
and apply to the above equation, we see that every choice of $\bigotimes_{j=1}^{m-1} X_i E_{j_ij_i}X_i^*$ 
gives rise to a linear map $\varphi_{j_1, \dots, j_m,X}$, which is either the identity map or the
transposition map. Evidently, the map 
$$\bigotimes_{j=1}^{m-1} X_i E_{j_ij_i}X_i^* \mapsto \varphi_{j_1, \dots, j_m,X}$$
is linear and hence continuous; the set
\begin{multline*}
\left\{ \bigotimes_{j=1}^{m-1} X_i E_{j_ij_i}X_i^*: 
1 \le j_i \le n_i \hbox{ and } X_i^*X_i = I_{n_i},\hbox{ for } i = 1, \dots, m-1 \right\} \cr
= \left\{x_1x_1^*\otimes \cdots \otimes x_{m-1}x_{m-1}^* : x_i \in \mathbb{C}^{ n_i} \hbox{ with } 1\le i \le m-1 \right\}
\end{multline*}
is connected. Thus, all the maps $\varphi_{j_1,\dots,j_{m-1},X}$ have to be the same.
%Using the partial trace function on the first $m-1$ subsystems with the fact that
%the set $$\{x_1x_1^*\otimes \cdots \otimes x_{m-1}x_{m-1}^* : x_i \in \mathbb{C}^{ n_i} \hbox{ with } 1\le i \le m-1\}$$
%is connected, we see that all the maps $\varphi_{j_1,\dots,j_{m-1},X}$ have to be the same.
Assume that this common map is $\varphi_m$, which is either the identity map or the transposition map.
By linearity, one can conclude that for any $\A \in \M $,  we have
$$\phi \left(\A \right) = \psi(A_1\otimes\cdots \otimes A_{m-1})\otimes \varphi_m(A_m)$$ for some $\psi(A_1\otimes\cdots \otimes A_{m-1}) \in M_{n_1\cdots n_{m-1}}$.
Note that $\psi:M_{n_1\cdots n_{m-1}}\rightarrow M_{n_1\cdots n_{m-1}}$ preserves the Ky Fan $k$-norm of all  matrices in $M_{n_1}\otimes\cdots\otimes M_{n_{m-1}}$. By the induction hypothesis, we know there exist unitary $\tilde{U },\tilde{V}$ such that
 $$\psi(A_1\otimes\cdots \otimes A_{m-1})=\tilde{U}(\varphi_1(A_1)\otimes\cdots\otimes \varphi_{m-1}(A_{m-1}))\tilde{V}$$
with each $\varphi_j$ being either the identity map or the transposition map. Hence, $\phi$ has the desired form and the proof is completed.
\end{proof}

\medskip
Using a similar argument and applying Lemma \ref{le3}, we can extend Theorem \ref{T2.5} to multipartite systems as follows.
\begin{theorem}
\label{T5}
Let $ 1\leq p <\infty$ and $p\ne 2$ and $\phi: \N\rightarrow \N$ be a linear map.
The following are equivalent.
\begin{enumerate}
\item[\rm (a)] $\|\phi(\A)\|_p = \|\A\|_p$ for all $A_i \in M_{n_i}$, $i =1,\dots,m$.

\item[\rm (b)] There are unitary matrices $U, V \in M_{n_1\cdots n_m}$ such that
\begin{equation*}\label{eq2}
\phi(\A) = U(\varphi_1(A_1)\otimes \cdots\otimes\varphi_m(A_m))V,\quad \hbox{for all}\quad A_i \in M_{n_i},\ i = 1,\dots,m,
\end{equation*}
where $\varphi_s$ is the identity map or the transposition map $X \mapsto X^t$ for $s=1,\ldots,m$.
\end{enumerate}
\end{theorem}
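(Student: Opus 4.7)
The plan is to mimic the proof of Theorem~\ref{T4}, using induction on $m$ with Theorem~\ref{T2.5} as the base case $m=2$, but replacing the Ky Fan $k$-norm estimates by the Clarkson-type inequalities in Lemma~\ref{le3} together with the characterization of equality given in Remark~\ref{rem7}. The overall target is to first show that there exist unitaries $U,V\in M_N$ (with $N=\prod n_i$) such that
$$\phi(E_{j_1j_1}\otimes\cdots\otimes E_{j_mj_m})=U(E_{j_1j_1}\otimes\cdots\otimes E_{j_mj_m})V$$
for all $1\le j_i\le n_i$, after which the concluding steps (lifting from diagonal rank-one tensors to general tensor products and identifying each factor map as identity or transposition) follow verbatim from the arguments in the proof of Theorem~\ref{T4}.

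The first step is to establish the analogue of the orthogonality relation
$$\phi(E_{i_1i_1}\otimes\cdots\otimes E_{i_mi_m})\,\bot\,\phi(E_{j_1j_1}\otimes\cdots\otimes E_{j_mj_m})$$
for all distinct $(i_1,\dots,i_m)\ne(j_1,\dots,j_m)$. As in Theorem~\ref{T4}, this reduces to verifying, for each $1\le r\le m$ and each pair $\mathbf{i},\mathbf{j}$ with $i_u\ne j_u$ for $1\le u\le r$, that the matrices
$$G_r=\phi\!\left(\bigotimes_{u=1}^{r-1}(E_{i_ui_u}+E_{j_uj_u})\otimes E_{i_ri_r}\otimes\bigotimes_{u=r+1}^{m}E_{i_ui_u}\right),\quad \hat G_r=\phi\!\left(\bigotimes_{u=1}^{r-1}(E_{i_ui_u}+E_{j_uj_u})\otimes E_{j_rj_r}\otimes\bigotimes_{u=r+1}^{m}E_{i_ui_u}\right)$$
are orthogonal. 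For the Schatten case, however, no splitting into $r\le\log_2 k$ and $r>\log_2 k$ subcases is needed: since $G_r$ and $\hat G_r$ are the $\phi$-images of two tensor products whose $r$th factors are orthogonal rank-one projections $E_{i_ri_r}$ and $E_{j_rj_r}$, the corresponding preimages are pairwise orthogonal, so the parallelogram-type equality
$$\|G_r+\hat G_r\|_p^p+\|G_r-\hat G_r\|_p^p=2(\|G_r\|_p^p+\|\hat G_r\|_p^p)$$
holds by the isometry hypothesis. Lemma~\ref{le3} and Remark~\ref{rem7} then give $G_r\,\bot\,\hat G_r$ directly, and telescoping over $r=1,\dots,m$ yields the required pairwise orthogonality of the images of $E_{i_1i_1}\otimes\cdots\otimes E_{i_mi_m}$.

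Once the image of each rank-one diagonal tensor product is identified (up to the common unitary change of coordinates) with itself, I would carry out exactly the argument used in Theorem~\ref{T4}: apply the same orthogonality technique to $\phi(E_{i_1i_1}\otimes\cdots\otimes E_{i_{m-1}i_{m-1}}\otimes YE_{jj}Y^*)$ for arbitrary unitary $Y\in M_{n_m}$ to conclude that this image has the form $E_{i_1i_1}\otimes\cdots\otimes E_{i_{m-1}i_{m-1}}\otimes Z$ with $Z$ rank one, whence by linearity
$$\phi(E_{i_1i_1}\otimes\cdots\otimes E_{i_{m-1}i_{m-1}}\otimes B)=E_{i_1i_1}\otimes\cdots\otimes E_{i_{m-1}i_{m-1}}\otimes\varphi_{i_1,\dots,i_{m-1}}(B).$$
Each $\varphi_{i_1,\dots,i_{m-1}}$ preserves the Schatten $p$-norm on $M_{n_m}$, and hence by the classical result is either $X\mapsto W X\tilde W$ or $X\mapsto WX^t\tilde W$; absorbing the unitaries and then running the partial-trace/connectedness argument of Theorem~\ref{T4} over tensor products of rank-one projections $x_1x_1^*\otimes\cdots\otimes x_{m-1}x_{m-1}^*$ forces a single common type $\varphi_m$. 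Invoking the induction hypothesis on the residual map $\psi:M_{n_1\cdots n_{m-1}}\to M_{n_1\cdots n_{m-1}}$, which preserves the Schatten $p$-norm of every $A_1\otimes\cdots\otimes A_{m-1}$, completes the factorization.

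The main obstacle is the same as in Theorem~\ref{T4}: organizing the inductive orthogonality argument so that Lemma~\ref{le3} is applied to pairs of $\phi$-images whose preimages are genuinely orthogonal in $M_N$. The payoff of working with the Schatten norm (rather than Ky Fan) is that Lemma~\ref{le3} yields $T\,\bot\, S$ in one shot whenever the equality case holds, so the delicate two-case analysis ($r\le\log_2 k$ versus $r>\log_2 k$) and the invocation of Lemmas~\ref{le2} and \ref{2.5} needed for Ky Fan norms are unnecessary here; the rest of the argument is then a faithful repetition of the steps already carried out in the proofs of Theorems~\ref{T1}, \ref{T2.5}, and~\ref{T4}.
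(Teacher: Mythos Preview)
Your proposal is correct and matches the paper's own approach: the paper states only that Theorem~\ref{T5} follows ``using a similar argument and applying Lemma~\ref{le3},'' and your write-up supplies exactly that---induction on $m$ with Theorem~\ref{T2.5} as base case, orthogonality of the $G_r,\hat G_r$ obtained in one step from the equality case of Lemma~\ref{le3} (since $G_r\pm\hat G_r$ are both $\phi$-images of tensor products), and then the partial-trace/connectedness reduction copied from Theorem~\ref{T4}. Your observation that the two-case split on $r$ versus $\log_2 k$ disappears in the Schatten setting is precisely the simplification the paper has in mind.
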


%%%%%%%%%%%%%%%%%%%%%%%%%%%%%%%%%%%%%%%%%%%%%%%%%%%%%%%%%%%%%%%%%%%%%%%%%%%%%%%%%%%%%%%%%%%%%%%%%%%%%%%%%%%%%%%%%%%%%%%%%%%%%%%%%%%%%%%%%%%%%%%%%%%%%%%%%%%%%%%%%%%%%%%%%%%%%%%%%%%%%%%%%%%%%%%%%%%%%%%%%%%%%%%%%%%%%%%%%%%%%%%%%%%%%%%%%%%%%%%%%%%%%%%%%%%%%%%%%%%%%%%%%%%%%%%%%%%%%%%%%%%%%%%%%%%%%%%%%%%%%%%%%%%%%%%%%%%%%%%%%%%%%%%%%%%%%%%%%%%%%%%%%%%%%%%%%%%%%%%%%%%
%%%%%%%%%%%%%%%%%%%%%%%%%%%%%%%%%%%%%%%%%%%%%%%%%%%%%%%%%%%%%%%%%%%%%%%%%%%%%%%%%%%%%%%%%%%%%%%%%%%%%%%%%%%%%%%%%%%%%%%%%%%%%%%%%%%%%%%%%%%%%%%%%%%%%%%%%%%%%%%%%%%%%%%%%%%%%%%%%%%%%%%%%%%%%%%%%%%%%%%%%%%%%%%%%%%%%%%%%%%%%%%%%%%%%%%%%%%%%%%%%%%%%%%%%%%%%%%%%%%%%%%%%%%%%%%%%%%%%%%%%%%%%%%%%%%%%%%%%%%%%%%%%%%%%%%%%%%%%%%%%%%%%%%%%%%%%%%%%%%%%%%%%%%%%%%%%%%%%%%%%%%
\bigskip
\noindent{\bf Acknowledgment}

%This research began when Fo\v{s}ner visited the Hong Kong Polytechnic University
%in the spring of 2011. She gratefully acknowledged the support and kind
%hospitality from the host university.
This research was supported by a Hong Kong GRC grant PolyU 502411 with Sze as the PI.
The grant also supported the post-doctoral fellowship of Huang
and the visit of Fo\v{s}ner to the Hong Kong Polytechnic University in the summer of 2012.
She gratefully acknowledged the support and kind hospitality from the host university.
Li was supported by a GRC grant and a USA NSF grant; this research was done when he was
a visiting professor of the University of Hong Kong in the spring of 2012; furthermore,
he is an honorary professor of Taiyuan University of Technology (100 Talent Program scholar),
and an honorary professor of the  Shanghai University.

%%%%%%%%%%%%%%%%%%%%%%%%%%%%%%%%%%%%%%%%%%%%%%%%%%%%%%%%%%%%%%%%%%%%%%%%%%%%%%%%%%%%%%%%%%%%%%%%%%%%%%%%%%%%%%%%%%%%%%%%%%%%%%%%%%%%%%%%%%%%%%%%%%%%%%%%%%%%%%%%%%%%%%%%%%%%%%%%%%%%%%%%%%%%%%%%%%%%%%%%%%%%%%%%%%%%%%%%%%%%%%%%%%%%%%%%%%%%%%%%%%%%%%%%%%%%%%%%%%%%%%%%%%%%%%%%%%%%%%%%%%%%%%%%%%%%%%%%%%%%%%%%%%%%%%%%%%%%%%%%%%%%%%%%%%%%%%%%%%%%%%%%%%%%%%%%%%%%%%%%%%%

%%%%%%%%%%%%%%%%%%%%%%%%%%%%%%%%%%%%%%%%%%%%%%%%%%%%%%%%%%%%%%%%%%%%%%%%%%%%%%%%%%%%%%%%%%%%%%%%%%%%%%%%%%%%%%%%%%%%%%%%%%%%%%%%%%%%%%%%%%%%%%%%%%%%%%%%%%%%%%%%%%%%%%%%%%%%%%%%%%%%%%%%%%%%%%%%%%%%%%%%%%%%%%%%%%%%%%%%%%%%%%%%%%%%%%%%%%%%%%%%%%%%%%%%%%%%%%%%%%%%%%%%%%%%%%%%%%%%%%%%%%%%%%%%%%%%%%%%%%%%%%%%%%%%%%%%%%%%%%%%%%%%%%%%%%%%%%%%%%%%%%%%%%%%%%%%%%%%%%%%%%%

\baselineskip16pt

\pagebreak

%%%%%%%%%%%%%%%%%%%%%%%%%%%%%%%%%%%%%%%%%%%%%%%%%%%%%%%%%%%%%%%%%%%%%%%%%%%%%%%%%%%%%%%%%%%%%%%%%%%%%%%%%%%%%%%%%%%%%%%%%%%%%%%%%%%%%%%%%%%%%%%%%%%%%%%%%%%%%%%%%%%%%%%%%%%%%%%%%%%%%%%%%%%%%%%%%%%%%%%%%%%%%%%%%%%%%%%%%%%%%%%%%%%%%%%%%%%%%%%%%%%%%%%%%%%%%%%%%%%%%%%%%%%%%%%%%%%%%%%%%%%%%%%%%%%%%%%%%%%%%%%%%%%%%%%%%%%%%%%%%%%%%%%%%%%%%%%%%%%%%%%%%%%%%%%%%%%%%%%%%%%

\end{document}